\documentclass[12pt]{article}

\usepackage{geometry}

\usepackage{mathptmx}
\usepackage[T1]{fontenc}
\usepackage{framed,multirow}
\usepackage{amssymb,amsmath,amsthm,bm}
\usepackage{latexsym}
\usepackage{mathtools}
\usepackage{subcaption}
\usepackage{graphicx}
\usepackage{booktabs}
\usepackage{makecell}
\usepackage{enumitem} 
\usepackage{url}
\usepackage{xcolor}

\geometry{left=1.36cm,right=1.36cm,top=1.56cm,bottom=1.86cm}

\newtheorem{theorem}{Theorem}[]
\newtheorem{corollary}{Corollary}[]
\newtheorem{remark1}{Remark}[]
\theoremstyle{definition}
\newtheorem{definition}{Definition}[]
\theoremstyle{remark}
\newtheorem{example}{\bf Example}[]
\newenvironment{decomp}[1]
{\vspace{\topsep} \noindent{\bf #1.}}{\vspace{\topsep}}

\newcommand{\dx}{\Delta x}
\newcommand{\dy}{\Delta y}
\newcommand{\dz}{\Delta z}
\newcommand{\dt}{\Delta t}

\author{
Shumo Cui
\thanks{
Department of Mathematics and SUSTech International Center for Mathematics, Southern University of Science and Technology, Shenzhen 518055, China
({\tt cuism@sustech.edu.cn}).
}~{}
,~~
Shengrong Ding
\thanks{
Department of Mathematics, Southern University of Science and Technology, Shenzhen 518055, China
({\tt dingsr@sustech.edu.cn}).
}~{}
,~~
Kailiang Wu
\thanks{
Corresponding author. 
Department of Mathematics and SUSTech International Center for Mathematics, Southern University of Science and Technology, Shenzhen 518055, China; 
National Center for Applied Mathematics Shenzhen (NCAMS), Shenzhen 518055, China
({\tt wukl@sustech.edu.cn}).
The work of K. Wu. is supported in part by National Natural Science Foundation of China (grant No.~12171227).
}~{}
}

\title{{\Large \bf Is the Classic Convex Decomposition Optimal for Bound-Preserving Schemes in Multiple Dimensions?}}

\date{}

\begin{document}
\maketitle


\begin{abstract}
Since proposed in [X. Zhang and C.-W. Shu, {\em J. Comput. Phys.}, 229: 3091--3120, 2010], the Zhang--Shu framework has attracted extensive attention and motivated many bound-preserving (BP)  high-order discontinuous Galerkin and finite volume schemes for various hyperbolic equations. A key ingredient in the framework is the decomposition of the cell averages of the numerical solution into a convex combination of the solution values at certain quadrature points, which helps to rewrite high-order schemes as convex combinations of formally first-order schemes.  The classic convex decomposition originally proposed by Zhang and Shu has been widely used over the past decade. It was verified, only for the 1D quadratic and cubic polynomial spaces, that the classic decomposition is optimal in the sense of achieving the mildest BP CFL condition. {\em Yet, it remained unclear whether the classic decomposition is optimal in multiple dimensions.} In this paper, we find that the classic {\em multidimensional} decomposition based on the tensor product of Gauss--Lobatto and Gauss quadratures is generally {\em not} optimal, and we discover a novel alternative decomposition for the 2D and 3D polynomial spaces of total degree up to 2 and 3, respectively, on Cartesian meshes. Our new decomposition allows a larger BP time step-size than the classic one, and moreover, it is rigorously proved to be {\em optimal} to attain the mildest BP CFL condition, yet requires much fewer nodes. The discovery of such an optimal convex decomposition is highly nontrivial yet meaningful, as it may lead to an improvement of high-order BP schemes for a large class of hyperbolic or convection-dominated equations, at the cost of only a slight and local modification to the implementation code. Several numerical examples are provided to further validate the advantages of using our optimal decomposition over the classic one in terms of efficiency. 
\vspace{15mm}
\end{abstract}

\section{Introduction}

This paper is concerned with high-order robust numerical schemes for hyperbolic conservation laws 
\begin{equation}\label{HCL}
	\begin{cases}
	u_t + \nabla \cdot {\bm f} (u) =0,  \quad  & ({\bm x},t) \in \mathbb R^d \times \mathbb R^{+},
	\\
	u({\bm x},0) = u_0({\bm x}), \quad & {\bm x} \in \mathbb R^d,
\end{cases}
\end{equation}
where ${\bm x}$ denotes the spatial coordinate variable(s) in $d$-dimensional space, $t$ denotes the time, 
the conservative variable(s) $u$ takes values in $\mathbb R^m$, and the flux ${\bm f}=(f_1,\dots,f_d)$ takes values in 
$(\mathbb R^m)^d$. Our discussions in this paper can also be applicable to 
other related hyperbolic or convection-dominated equations. 

Solutions to the hyperbolic equations \eqref{HCL} often satisfy certain bounds, which constitute a convex invariant region $G \subset \mathbb R^m$.
When numerically solving such hyperbolic equations, it is highly desirable or even essential to 
preserve the intrinsic bounds, namely, to preserve the numerical solutions in the region $G$. 
In fact, if the numerical solutions go outside the bounds, for example, negative density or pressure is produced 
when solving the Euler equations, the discrete problem would become ill-posed due to the loss of hyperbolicity of the system, 
and may lead to the instability or breakdown of the numerical computation. 

As well known, first-order accurate monotone schemes, 
 such as the Godunov scheme, the Lax--Friedrichs scheme, and 
the Engquist--Osher scheme, are bound-preserving (BP) for scalar conservation laws and many hyperbolic systems. 
However, seeking high-order accurate BP schemes is rather nontrivial. 
In the pioneering work of \cite{zhang2010,zhang2010b}, Zhang and Shu proposed a general framework of designing high-order BP discontinuous Galerkin (DG) and  finite volume (FV) schemes for hyperbolic conservation laws on rectangular meshes,  later generalized to triangular meshes in \cite{zhang2012maximum}. 
Over the past decade, the Zhang--Shu framework has attracted extensive attention and been applied to various hyperbolic systems (e.g., \cite{xing2010positivity,zhang2011,WangZhangShuNing2012,zhang2012minimum,QinShuYang2016,Wu2017,jiang2018invariant,du2019high,wu2021minimum}) 
and convection-dominated equations  
 (e.g., \cite{zhang2012maximum-b,zhang2013maximum,ZHANG2017301,sun2018discontinuous,du2019maximum}). 
Recently, motivated by a series of BP works \cite{WuTangM3AS,Wu2017a,WuShu2018,WuShu2020NumMath} 
for magnetohydrodynamics, 
the geometric quasilinearization (GQL) framework was proposed in  \cite{Wu2021GQL} for studying BP problems involving nonlinear constraints. 
For more developments on high-order BP schemes, we refer the reader to the review papers \cite{zhang2011b,XuZhang2017,Shu2020AC} and some other BP techniques \cite{Xu2014,xiong2016parametrized,WuTang2015,guermond2017invariant}. 


An essential ingredient in the Zhang--Shu framework \cite{zhang2010,zhang2010b} is to decompose the cell averages of the numerical solution into a convex combination of the solution values at certain quadrature points.  
Based on such a convex decomposition, one can reformulate a one-dimensional (1D) or multidimensional high-order FV or DG scheme into a convex combination of formally 1D first-order schemes. This reformulation then leads to a sufficient condition for the BP property of the updated cell averages, combined with a 
simple local scaling limiter that can enforce the sufficient condition without losing the high-order accuracy \cite{zhang2010,zhang2010b}.

To illustrate the role of convex decomposition in Zhang--Shu's BP framework, let us consider  
a $(k+1)$th-order FV or DG scheme for 1D conservation laws with reconstructed or DG polynomials 
of degree $k$, denoted by $p_i(x)$, on cell $\Omega_{i}=[x_{i-\frac12}, x_{i+\frac12}]$. 
With forward 
Euler time discretization, the evolution equation of cell averages 
can be written as 
\begin{equation}\label{ZS-1D-cell-ave-evo}
	\bar {u}_i^{n+1} = \bar u_{i}^n - \lambda 
	\left(  \hat f( u_{i+\frac12}^{-}, u_{i+\frac12}^{+}  ) - \hat f( u_{i-\frac12}^{-}, u_{i-\frac12}^{+}  )  \right), 
\end{equation} 
where $\bar u_{i}^n$ is the cell average of $p_i(x)$ on $\Omega_{i}$ at time level $n$, 
$\lambda = \Delta t/\Delta x$ is the ratio of the temporal and spatial step-sizes, $u_{i-\frac12}^{+} = p_i( x_{i-\frac12} )$ and 
$u_{i+\frac12}^{-} = p_i( x_{i+\frac12} ) $ for all $i$. 
Here, $\hat f(\cdot,\cdot)$ is a BP numerical flux with which the first-order scheme is BP under  a suitable CFL condition 
$
	a  \lambda \le c_0,
$ 
where $a$ denotes the maximum characteristic speed, and $c_0$ is the maximum allowable CFL number for the corresponding first-order scheme. 
Note that the $L$-point Gauss--Lobatto quadrature with $L=\lceil \frac{k+3}{2}\rceil$ is exact for polynomials of degree up to $k$. This implies the following convex decomposition \cite{zhang2010}:
\begin{equation}\label{eq:GL1D}
	\bar u_i^n = \frac{1}{\Delta x} \int_{ x_{i-\frac12} }^{ x_{i+\frac12} } p_i(x) {\rm d} x 
	= {\omega}_1^{\tt GL}  u_{i-\frac12}^{+}  +  {\omega}_L^{\tt GL}  u_{i+\frac12}^{-} +
	\sum_{s = 2}^{L-1}  {\omega}_\ell^{\tt GL} p_i(  x_{i,s}^{\tt GL} ),
\end{equation}
where  $\{ {\omega}_s^{\tt GL} \}$ are the Gauss--Lobatto weights with ${\omega}_1^{\tt GL}={\omega}_L^{\tt GL}=1/(L(L-1))$, and $\{  x_{i,s}^{\tt GL} \}$ are the quadrature nodes on $\Omega_{i}$ with $x_{i,1}^{\tt GL} = x_{i-\frac12}$ and $x_{i,L}^{\tt GL} = x_{i+\frac12}$.    
Based on the decomposition \eqref{eq:GL1D}, Zhang and Shu \cite{zhang2010,zhang2010b} 
rewrote the scheme \eqref{ZS-1D-cell-ave-evo}
 as 
\begin{equation}\label{eq:1Dcd}
	\bar u_i^{n+1} =  {\omega}_1^{\tt GL}   \Pi_1 +   {\omega}_L^{\tt GL} \Pi_L  + \sum_{s = 2}^{L-1}  {\omega}_s^{\tt GL} p_i(  x_{i,s}^{\tt GL} ),
\end{equation}
where 
\begin{equation*}
	\Pi_1 :=  u_{i-\frac12}^+ - \frac{ \lambda  }{ {\omega}_1^{\tt GL}  } \left( 
	\hat f ( u_{i-\frac12}^+, u_{ i+\frac12  }^- ) 
	- \hat  f( u_{i-\frac12}^{-}, u_{i-\frac12}^{+}  )
	\right),
	 \qquad 
	\Pi_L :=  u_{i+\frac12}^- - \frac{ \lambda  }{ {\omega}_L^{\tt GL}  } \left( 
\hat f( u_{i+\frac12}^{-}, u_{i+\frac12}^{+}  )
- \hat f ( u_{i-\frac12}^+, u_{ i+\frac12  }^- ) 
\right)	
\end{equation*}
are of the same form as the three-point first-order scheme with a scaled time step-size. 
Thanks to the convex decomposition \eqref{eq:1Dcd} and the BP property of the first-order scheme, 
 if 
 we use a BP limiter \cite{zhang2010} to enforce  
 \begin{equation}\label{eq:1DPPcond}
 	p_{i}(  x_{i,s}^{\tt GL} ) \in G  \qquad \forall i, s, 
 \end{equation}
 then by the convexity of $G$, the high-order scheme \eqref{ZS-1D-cell-ave-evo} preserves $\bar u_j^{n+1} \in G$ under the CFL condition 
 \begin{equation}\label{1Dhst_CFL}
 	a  \lambda \le c_0 \omega_1^{\tt GL}. 
 \end{equation}

As we have seen, the convex decomposition \eqref{eq:GL1D} plays a critical role in 
constructing high-order BP schemes. It determines not only the theoretical BP CFL condition \eqref{1Dhst_CFL} of the resulting scheme, but also the points   \eqref{eq:1DPPcond} to perform the BP limiter. 
In fact, one may choose a different convex decomposition in the above analysis. 
In the 1D case, 
any type of quadrature rule, with weights all positive and nodes including the two endpoints, 
would give a feasible convex decomposition. 
However, different decomposition would affect the theoretical BP CFL condition and thus the computational costs. It is natural to ask what decomposition is optimal in the sense of 
achieving the mildest BP CFL condition. 
Zhang and Shu mentioned in \cite[Remark 2.7]{zhang2010} that they checked, for $k=2,3$, that their   
1D decomposition \eqref{eq:GL1D} is optimal. 
For $k\ge 4$, the optimality of the 1D decomposition \eqref{eq:GL1D} has not been proved yet.

This paper aims to make the first attempt at questing the optimal convex decomposition in the {\em multidimensional} cases. In the multiple dimensions, Zhang and Shu \cite{zhang2010,zhang2010b} proposed the classic 
convex decomposition based on the tensor product of the Gauss--Lobatto quadrature and the Gauss quadrature rules; see \eqref{eq:411}. 
As the 1D case, their decomposition has been an important foundation for constructing high-order BP multidimensional schemes. 
Over the past decade, the classic Zhang--Shu decomposition 
has been widely adopted in designing many high-order BP schemes for various hyperbolic or convection-dominated equations. It is natural to ask the following question:
\\[0.01mm]
\begin{center} 
	\parbox{0.888\textwidth}{\em {
			Is the classic convex decomposition optimal in multiple dimensions?}\\[0.01mm]}
\end{center}
In this work, we find, in the {\em multidimensional} cases, that the classic decomposition is 
generally not optimal for the $\mathbb P^k$ spaces, {\em i.e.}~the multivariate polynomial spaces of total degree up to $k$. {\em Seeking the optimal convex decomposition in the multidimensional cases is highly complicated and challenging.} 
In this paper, we restrict our attention to two commonly used spaces ($\mathbb P^2$ and $\mathbb P^3$), which are typically used in the third-order and fourth-order DG schemes, on the Cartesian meshes. For these polynomial spaces, {\em we discover a novel alternative decomposition, 
which is rigorously proved to be optimal, namely, to attain the mildest BP CFL condition, yet requires much fewer nodes.} 
Based on our novel optimal convex decomposition, we can establish more efficient high-order BP DG schemes in the Zhang--Shu framework, as it allows a notably larger BP time step-size than the classic one.  
The discovery of our optimal convex decomposition is highly nontrivial and may have a broad impact, as it would 
lead to an overall improvement of third-order and fourth-order BP schemes for  
a large class of hyperbolic or convection-dominated equations at the cost of only a slight and local modification to the implementation code. 
We will present several numerical examples to further validate the remarkable advantages of using our optimal decomposition over the classic one in terms of efficiency. 
It is worth mentioning that seeking the optimal convex decomposition for general $\mathbb P^k$ spaces ($k\ge 4$) in the multidimensional cases (on the Cartesian meshes or unstructured meshes) seems challenging and is still open. 
We hope the present paper could be helpful for motivating further discussions on this interesting problem 
in the future.

\section{General convex decomposition for 2D high-order BP schemes}\label{sec:2}


This section discusses the general feasible convex decomposition for constructing 2D high-order BP DG schemes within  
the Zhang--Shu framework. 

Let $\mathbb P^k$ denote the space of multivariate polynomials of total degree up to $k$. 
We consider the $(k+1)$th-order $\mathbb P^k$-based DG scheme with the forward Euler time discretization for solving 
 the 2D hyperbolic conservation laws
\begin{equation}\label{2DCL}
u_t+f_1(u)_x+f_2(u)_y = 0, \quad (x,y,t) \in \mathbb{R} \times \mathbb{R} \times \mathbb R^{+}.
\end{equation}
All our discussions in this paper are also valid for high-order strong-stability-preserving (SSP) time discretizations \cite{GottliebKetchesonShu2011}, as they are convex combinations of forward Euler step. 
Following the Zhang--Shu framework \cite{zhang2010,zhang2010b}, in order to design a BP DG scheme, we only need to ensure the cell averages within the region $G$. As long as the BP property of the updated cell averages is guaranteed, 
one may employ a simple BP limiter to enforce the pointwise bounds 
of the piecewise DG polynomial solutions without affecting the high-order accuracy \cite{zhang2010,zhang2010b}.

On a rectangular cell $\Omega_{ij}:=[x_{i-\frac12},x_{i+\frac12}]\times[y_{j-\frac12},y_{j+\frac12}]$, the evolution equation of cell averages for the $(k+1)$th-order DG scheme reads
\begin{equation}\label{eq:171}
	\bar {u}_{ij}^{n+1} = 
	\bar u_{ij}^n - 
	\frac{\Delta t}{\Delta x} \sum_{q=1}^Q \omega_q^{\tt G}
	\left[  \hat f_1 \big( u_{i+\frac12,q}^{-}, u_{i+\frac12,q}^{+}  \big) - \hat f_1 \big( u_{i-\frac12,q}^{-}, u_{i-\frac12,q}^{+}  \big)  \right] - 
	\frac{\Delta t}{\Delta y} \sum_{q=1}^Q \omega_q^{\tt G} 
	\left[  \hat f_2 \big( u_{q,j+\frac12}^{-}, u_{q,j+\frac12}^{+}  \big) - \hat f_2 \big( u_{q,j-\frac12}^{-}, u_{q,j-\frac12}^{+}  \big)  \right], 
\end{equation} 
where 
\begin{equation*}
	u_{i-\frac12,q}^{+} = p_{ij} \big( x_{i-\frac12}, y_{j,q}^{\tt G} \big), \quad 
u_{i+\frac12,q}^{-} = p_{ij} \big( x_{i+\frac12}, y_{j,q}^{\tt G} \big), \quad 
u_{q,j-\frac12}^{+} = p_{ij} \big( x_{i,q}^{\tt G}, y_{j-\frac12} \big), \quad 
u_{q,j+\frac12}^{-} = p_{ij} \big( x_{i,q}^{\tt G}, y_{j+\frac12} \big)
\end{equation*}
with $p_{ij}(x,y) \in \mathbb{P}^k$ denoting the DG solution polynomial on $\Omega_{ij}$ at time level $n$ satisfying
\begin{equation*}
	\bar {u}_{ij}^{n} = \frac{1}{\dx \dy} \int_{ x_{i-\frac12} }^{ x_{i+\frac12} } \int_{ y_{j-\frac12} }^{ y_{j+\frac12} } p_{ij}(x,y) \, {\rm d} y {\rm d} x, 
\end{equation*}
and $\{x_{i,q}^{\tt G}\}_{q=1}^Q$ and $\{y_{j,q}^{\tt G}\}_{q=1}^Q$ respectively denote 
the $Q$-point Gauss quadrature nodes in the intervals 
$[x_{i-\frac12},x_{i+\frac12}]$ and $[y_{j-\frac12},y_{j+\frac12}]$, with 
the corresponding quadrature weights $\{\omega_{q}^{\tt G}\}$ satisfying 
$\sum_{q=1}^Q \omega_{q}^{\tt G} =1$. For the $\mathbb P^k$-based DG scheme, $Q$ is typically taken as $k+1$, such that the quadrature has sufficiently high-order accuracy.

In \eqref{eq:171}, we take the numerical fluxes $\hat{f}_1$ and $\hat{f}_2$ as the BP numerical fluxes with which the corresponding 1D three-point first-order schemes are BP, {\rm i.e.}, for any $u_1,u_2,u_3 \in G$ it holds that 
\begin{equation}\label{1DBP}
 u_2 - \frac{ \dt }{ \dx} \left( \hat{f}_1(u_2,u_3) - \hat{f}_1(u_1,u_2) \right) \in G, \qquad 
 u_2 - \frac{\dt }{ \dy} \left( \hat{f}_2(u_2,u_3) - \hat{f}_2(u_1,u_2) \right) \in G \qquad 
\end{equation}
under a suitable CFL condition $\max\{a_1 \dt / \dx,a_2 \dt/\dy\} \le c_0$, where $a_1$ and $a_2$ denote the maximum characteristic speeds in $x$- and $y$-directions, and $c_0$ is the maximum allowable CFL number for the 1D first-order schemes. For example, typically $c_0=1$ for the Lax--Friedrichs flux \cite{zhang2010b,jiang2018invariant},  and $c_0=\frac12$ for the HLL and HLLC fluxes \cite{jiang2018invariant}.

%

\subsection{Feasible convex decomposition in 2D}

Similar to the 1D case (\ref{eq:GL1D}), the BP analysis and design of a 2D scheme  (\ref{eq:171}) also require certain 2D quadrature rule to decompose the cell average $\bar u_{ij}^n$ into a convex combination of the values of $p_{ij}$ at some points. 
A qualified 2D quadrature, which we call {\em feasible convex decomposition}, should satisfy three requirements, as defined below. 

\begin{definition}[Feasible convex decomposition in 2D]\label{def:2D_FCAD} 
	A 2D convex decomposition  
	\begin{equation}\label{2Ddecomp}
			\bar {u}_{ij}^{n} =  
		 \sum_{q=1}^Q \omega_{q}^{\tt G}  \left[  
		 \omega_1^- u_{i-\frac{1}{2}, q}^{+} +
		 \omega_1^+ u_{i+\frac{1}{2}, q}^{-} +
		 \omega_2^- u_{q, j-\frac{1}{2}}^{+} +
		 \omega_2^+ u_{q, j+\frac{1}{2}}^{-}
		 \right]
		 + \sum_{s=1}^S \omega_s p_{ij} ( x_{ij}^{(s)}, y_{ij}^{(s)} )
	\end{equation}
	is said to be {\em feasible} for the polynomial space $\mathbb P^k$, if it simultaneously satisfies 
	the following three conditions:
	\begin{enumerate}[label=(\roman*)]
		\item  the convex decomposition holds exactly for all $p \in \mathbb P^k$;
		\item  the weights $\{{\omega}_1^\pm, {\omega}_2^\pm, {\omega}_s\}$ are all positive (their summation equals one);
		\item  the internal node set $\mathbb S_{ij} := \{ ( x_{ij}^{(s)}, y_{ij}^{(s)}  ) \}_{s=1}^S \subset \Omega_{ij}$. 
	\end{enumerate}
\end{definition}

Based on the tensor product of the $L$-point Gauss quadrature (with $L=\lceil \frac{k+3}{2}\rceil$) and the $Q$-point Gauss--Lobatto quadrature, the cell average $\bar {u}_{ij}^{n}$ can be decomposed into a convex combination of point values of $p_{ij}$ as follows: 
\begin{align}\nonumber
\bar {u}_{ij}^{n} 
& = \frac{1}{\Delta x \Delta y} \int_{y_{j-\frac{1}{2}}}^{y_{j+\frac{1}{2}}} \int_{x_{i-\frac{1}{2}}}^{x_{i+\frac{1}{2}}} p_{i j}(x, y) \mathrm{d} x \mathrm{d} y
=
\sum_{s=1}^{L} \omega_{s}^{\tt {GL}}\left(\frac{1}{\Delta y} \int_{y_{j-\frac{1}{2}}}^{y_{j+\frac{1}{2}}} p_{i j}\left(x_{i, s}^{\tt {GL}}, y\right) \mathrm{d} y\right) =
\sum_{s=1}^{L} \sum_{q=1}^{Q} \omega_{s}^{\tt {GL}} \omega_{q}^{\tt {G}}p_{i j}\left(x_{i, s}^{\tt {GL}}, y_{j, q}^{\tt {G}}\right) 
 \\ \label{eq:GGL_x}
&  =
\sum_{q=1}^Q \omega_{q}^{\tt G} \omega_1^{\tt GL} \left[
		 u_{i-\frac{1}{2}, q}^{+} + u_{i+\frac{1}{2}, q}^{-}
\right] 
 +
\sum_{s=2}^{L-1} \sum_{q=1}^{Q}  \omega_s^{\tt GL}  \omega_q^{\tt G} 
p_{i j}\left(x_{i,s}^{\tt GL}, y_{j,q}^{\tt G} \right)
.
\end{align}
Similarly, 
by applying the quadrature rules in a different order, one obtains
\begin{equation}\label{eq:GGL_y}
\bar {u}_{ij}^{n} = 
\sum_{q=1}^Q \omega_{q}^{\tt G} \omega_1^{\tt GL} \left[
		 u_{q, j-\frac{1}{2}}^{+} +
		 u_{q, j+\frac{1}{2}}^{-}
\right] 
 +
\sum_{s=2}^{L-1} \sum_{q=1}^{Q}  \omega_s^{\tt GL}  \omega_q^{\tt G} 
p_{i j}\left(x_{i,q}^{\tt G},y_{j,s}^{\tt GL}  \right)
.
\end{equation}

\begin{decomp}{Zhang--Shu classic convex decomposition}
In \cite{zhang2010,zhang2010b}, Zhang and Shu proposed the classic convex decomposition by using the tensor-product decomposition formulas (\ref{eq:GGL_x}) and (\ref{eq:GGL_y}):
\begin{align}\nonumber
\bar{u}_{i j}^{n} & 
= 
\kappa_1 \cdot \bar {u}_{ij}^{n} + \kappa_2 \cdot \bar {u}_{ij}^{n}
=
\kappa_1 \cdot \textrm{equation (\ref{eq:GGL_x})} + 
\kappa_2 \cdot \textrm{equation (\ref{eq:GGL_y})} \\ \label{eq:411}
& =
\sum_{q=1}^Q \omega_{q}^{\tt G} \omega_1^{\tt GL} \left[
		 \kappa_1 u_{i-\frac{1}{2}, q}^{+} +
		 \kappa_1 u_{i+\frac{1}{2}, q}^{-} +
		 \kappa_2 u_{q, j-\frac{1}{2}}^{+} +
		 \kappa_2 u_{q, j+\frac{1}{2}}^{-}
\right] 
 +
\sum_{s=2}^{L-1} \sum_{q=1}^{Q}  \omega_s^{\tt GL}  \omega_q^{\tt G} 
\left[ 
\kappa_1  p_{i j}\left(x_{i,s}^{\tt GL}, y_{j,q}^{\tt G} \right) + 
\kappa_2  p_{i j}\left(x_{i,q}^{\tt G},y_{j,s}^{\tt GL}  \right)
\right]
\end{align}
with 
\begin{equation*}
	 \kappa_1 := \frac{\frac{a_{1}}{\dx}}{\frac{a_1}{\dx}+\frac{a_2}{\dy}}, \qquad
 \kappa_2 := \frac{\frac{a_{2}}{\dy}}{\frac{a_1}{\dx}+\frac{a_2}{\dy}}.
\end{equation*}
This classic convex decomposition has been widely used over the past decase.
\end{decomp}

\begin{decomp}{Jiang--Liu convex decomposition}
In \cite{jiang2018invariant}, Jiang and Liu used a simpler convex decomposition:
\begin{align}\nonumber
\bar{u}_{i j}^{n} 
& =
\frac12 \cdot \bar {u}_{ij}^{n} + \frac12 \cdot \bar {u}_{ij}^{n}
=
\frac12 \cdot \textrm{equation (\ref{eq:GGL_x})} + 
\frac12 \cdot \textrm{equation (\ref{eq:GGL_y})} \\ \label{eq:433}
& = 
\sum_{q=1}^Q \frac{\omega_{q}^{\tt G}\omega_1^{\tt GL}}{2}\left[
		 u_{i-\frac{1}{2}, q}^{+} +
		 u_{i+\frac{1}{2}, q}^{-} +
		 u_{q, j-\frac{1}{2}}^{+} +
		 u_{q, j+\frac{1}{2}}^{-}
\right] 
+
\sum_{s=2}^{L-1} \sum_{q=1}^{Q}  \frac{\omega_s^{\tt GL}  \omega_q^{\tt G} }{2}
\left[ 
 p_{i j}\left(x_{i,s}^{\tt GL}, y_{j,q}^{\tt G} \right) + 
 p_{i j}\left(x_{i,q}^{\tt G},y_{j,s}^{\tt GL}  \right)
\right].
\end{align}
\end{decomp}

\begin{remark1}
Both the Zhang--Shu decomposition \eqref{eq:411} and the Jiang--Liu decomposition \eqref{eq:433} are examples of 2D feasible convex decomposition, and they share the same (classic) internal node set
\begin{equation}\label{eq:450}
	\mathbb{S}^{\tt Zhang-Shu}_{ij} = \bigcup_{s=2}^{L-1}\bigcup_{q=1}^{Q}\left\{(x_{i,s}^{\tt GL}, y_{j,q}^{\tt G} ),(x_{i,q}^{\tt G}, y_{j,s}^{\tt GL} )\right\}.
\end{equation}
\end{remark1}

\subsection{BP conditions via general convex decomposition}

Motivated by \cite{zhang2010,zhang2010b,jiang2018invariant}, this subsection 
studies the BP conditions for the scheme (\ref{eq:171}) via the general feasible convex decomposition \eqref{2Ddecomp}. 
One can rewrite \eqref{2Ddecomp} as 
\begin{equation}\label{eq:454}
	\bar u_{ij}^{n} = 
\sum_{q=1}^Q   \omega_q^{\tt G} \,
\widehat \omega_1 \left(  u_{i-\frac12,q}^+ +  u_{i+\frac12,q}^- \right) 
+ 
\sum_{q=1}^Q   \omega_q^{\tt G} \,
\widehat \omega_2 \left(  u_{q,j-\frac12}^+ +  u_{q,j+\frac12}^- \right) + \Pi,
\end{equation}
where $\widehat \omega_1 := \min\{ \omega_1^-, \omega_1^+ \}$, $\widehat \omega_2 := \min\{ \omega_2^-, \omega_2^+ \}$, and 
\begin{equation}\label{eq:Pi}
	\Pi := 
\sum_{q=1}^Q  \omega_q^{\tt G} 
\left[ 
(\omega_1^- -\widehat \omega_1) u_{i-\frac12,q}^+ + 
(\omega_1^+ -\widehat \omega_1) u_{i+\frac12,q}^-
+ 
(\omega_2^--\widehat \omega_2) u_{q,j-\frac12}^+ + 
(\omega_2^+-\widehat \omega_2) u_{q,j+\frac12}^- \right]
+\sum_{s=1}^S \omega_s p_{ij}( x_{ij}^{(s)}, y_{ij}^{(s)}  ).
\end{equation}
By using a local scaling limiter \cite{zhang2010,zhang2010b}, one can enforce the DG solution polynomial $p_{ij}$ to satisfy the desired bounds on the boundary nodes:
\begin{equation}\label{eq:478a}
u_{i-\frac12,q}^{+}  \in G, \quad
u_{i+\frac12,q}^{-}  \in G, \quad
u_{q,j-\frac12}^{+}  \in G, \quad
u_{q,j+\frac12}^{-}  \in G, \quad
q = 1,\dots,Q, \quad \forall i,j,
\end{equation}
and 
on the internal nodes:
\begin{equation}\label{eq:478b}
p_{ij}( x_{ij}^{(s)}, y_{ij}^{(s)}) \in G, \quad s = 1,\dots,S, \quad \forall i,j.
\end{equation}
Noting that (\ref{eq:Pi}) expresses $\Pi$ as a convex combination of the values in (\ref{eq:478a}) and (\ref{eq:478b}), we conclude that $\Pi \in G$, because $G$ is convex. 
Substituting the decomposition \eqref{eq:454} into \eqref{eq:171}, one can rewrite the scheme \eqref{eq:171} as 
\begin{equation}\label{eq:476}
		\bar {u}_{ij}^{n+1}  = \sum_{q=1}^Q    \omega_q^{\tt G}  \,
		\widehat \omega_1 (  H_{i+\frac12,q}^- + H_{i-\frac12,q}^+  ) + 
		 \sum_{q=1}^Q    \omega_q^{\tt G}  \,
		\widehat \omega_2 ( H_{q,j+\frac12}^- + H_{q,j-\frac12}^+ ) + \Pi
\end{equation} 
with
\begin{align*}
	H_{i+\frac12,q}^- &= u_{i+\frac12,q}^- - \frac{\Delta t}{\widehat \omega_1 \Delta x} 
	\left( \hat f_1 \big( u_{i+\frac12,q}^-, u_{i+\frac12,q}^+ \big)  
	- \hat f_1 \big( u_{i-\frac12,q}^+, u_{i+\frac12,q}^- \big)  \right),
	\\
	H_{i-\frac12,q}^+ & = u_{i-\frac12,q}^+ - \frac{\Delta t}{\widehat \omega_1 \Delta x} 
	\left(   \hat f_1 \big( u_{i-\frac12,q}^+, u_{i+\frac12,q}^- \big) 
	- \hat f_1 \big( u_{i-\frac12,q}^-, u_{i-\frac12,q}^+ \big)   \right),
	\\
	H_{q,j+\frac12}^- &= u_{q,j+\frac12}^- - \frac{\Delta t}{\widehat \omega_2 \Delta y} 
	\left( \hat f_2 \big( u_{q,j+\frac12}^-, u_{q,j+\frac12}^+ \big)  
	- \hat f_2 \big( u_{q,j-\frac12}^+, u_{q,j+\frac12}^- \big)  \right),
	\\
	H_{q,j-\frac12}^+ &= u_{q,j-\frac12}^+ - \frac{\Delta t}{\widehat \omega_2 \Delta y} 
	\left(  \hat f_2 \big( u_{q,j-\frac12}^+, u_{q,j+\frac12}^- \big) 
	- \hat f_2 \big( u_{q,j-\frac12}^-, u_{q,j-\frac12}^+ \big) \right),
\end{align*}
which are formally 1D three-point first-order schemes \eqref{1DBP} that satisfy  
\begin{equation*}
	H_{i+\frac12,q}^-\in G, \quad  H_{i-\frac12,q}^+ \in G, \quad H_{q,j+\frac12}^- \in G, \quad H_{q,j-\frac12}^+ \in G,
\end{equation*}
under the CFL type conditions 
\begin{equation}\label{eq:tempCFL}
a_1 {\Delta t}\le {\widehat \omega_1  \Delta x}, \quad a_2 {\Delta t}\le { \widehat \omega_2  \Delta y}. 
\end{equation}
Because (\ref{eq:476}) is a convex combination form, by the convexity of $G$ we conclude that $\bar {u}_{ij}^{n+1} \in G$ under the conditions \eqref{eq:tempCFL}, which are equivalent to the following BP CFL condition \eqref{2Dhst_CFL_all}. 
In summary, we arrive at the following theorem.


\begin{theorem}[BP via general convex decomposition]\label{thm:CFL}
If there is a 2D feasible convex decomposition in the form of \eqref{eq:QR2D} and the solution polynomial $p_{ij}$ satisfies (\ref{eq:478a}) and (\ref{eq:478b}) for all $i$ and $j$, 
then the high-order scheme \eqref{eq:171} preserves 
$\bar u_{ij}^{n+1}\in G$ under the BP CFL condition
\begin{equation}\label{2Dhst_CFL_all}
\Delta t \le c_0 \min \left\{ \frac{\omega_1^- \Delta x}{a_1}, \frac{\omega_1^+ \Delta x}{a_1} , \frac{\omega_2^- \Delta y}{a_2}, \frac{\omega_2^+ \Delta y}{a_2} \right\}.
\end{equation}
\end{theorem}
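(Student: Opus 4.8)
The plan is to recast the update \eqref{eq:171} as a single convex combination of point values that are all guaranteed to lie in $G$, taking the feasible decomposition \eqref{2Ddecomp} as the starting point. First I would fix a cell $\Omega_{ij}$, set $\widehat\omega_1:=\min\{\omega_1^-,\omega_1^+\}$ and $\widehat\omega_2:=\min\{\omega_2^-,\omega_2^+\}$, and split \eqref{2Ddecomp} into the form \eqref{eq:454}: I peel off exactly the weight $\widehat\omega_\ell$ from each edge-value term to build the ``flux-carrying'' part, and collect the residual edge weights $\omega_1^\pm-\widehat\omega_1$, $\omega_2^\pm-\widehat\omega_2$ together with the internal-node terms into the remainder $\Pi$ of \eqref{eq:Pi}. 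Choosing the \emph{minimum} is precisely what makes these residual weights nonnegative, so that $\Pi$ is a nonnegative combination of the boundary and internal point values; by the limiter conditions \eqref{eq:478a}--\eqref{eq:478b} (imposed on every cell, since the three-point building blocks below couple each cell to its neighbors' edge values) all of these values lie in $G$.

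The crux is substituting \eqref{eq:454} into \eqref{eq:171} to obtain the identity \eqref{eq:476}, and this is the step I would verify most carefully. Adding the two $x$-direction building blocks gives $\widehat\omega_1\big(H_{i+\frac12,q}^-+H_{i-\frac12,q}^+\big)=\widehat\omega_1\big(u_{i+\frac12,q}^-+u_{i-\frac12,q}^+\big)-\tfrac{\Delta t}{\Delta x}\big(\hat f_1(u_{i+\frac12,q}^-,u_{i+\frac12,q}^+)-\hat f_1(u_{i-\frac12,q}^-,u_{i-\frac12,q}^+)\big)$, because the two interior numerical fluxes $\hat f_1(u_{i-\frac12,q}^+,u_{i+\frac12,q}^-)$ telescope away; the same cancellation occurs in $y$. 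Summing against $\omega_q^{\tt G}$ then reproduces exactly the flux differences appearing in \eqref{eq:171}, so \eqref{eq:476} is a genuine identity rather than a mere rearrangement.

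It then remains to check that \eqref{eq:476} is a convex combination of elements of $G$. The weights $\omega_q^{\tt G}\widehat\omega_\ell$ of the building blocks, the residual edge weights, and the internal weights $\omega_s$ are all nonnegative, and using $\sum_q\omega_q^{\tt G}=1$ with feasibility condition (ii) they sum to $2\widehat\omega_1+2\widehat\omega_2+(1-2\widehat\omega_1-2\widehat\omega_2)=1$. Each $H$-term has the three-point form \eqref{1DBP} but run with the rescaled ratio $\tfrac{\Delta t}{\widehat\omega_\ell\Delta x}$ (resp.\ $\tfrac{\Delta t}{\widehat\omega_\ell\Delta y}$) in place of $\tfrac{\Delta t}{\Delta x}$, so by \eqref{1DBP} it lies in $G$ once the scaled CFL numbers satisfy $a_1\Delta t\le c_0\widehat\omega_1\Delta x$ and $a_2\Delta t\le c_0\widehat\omega_2\Delta y$, i.e.\ under the conditions \eqref{eq:tempCFL}; convexity of $G$ then yields $\bar u_{ij}^{n+1}\in G$. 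Finally I would substitute $\widehat\omega_1=\min\{\omega_1^-,\omega_1^+\}$ and $\widehat\omega_2=\min\{\omega_2^-,\omega_2^+\}$ into these two inequalities, which merges the four resulting bounds into the single BP CFL condition \eqref{2Dhst_CFL_all}. The only genuinely delicate point is the flux telescoping that makes \eqref{eq:476} exact; everything else is convexity bookkeeping, with the observation that extracting exactly the minimum weight $\widehat\omega_\ell$ is what simultaneously keeps the residual weights nonnegative and yields the largest admissible time step that this splitting can deliver.
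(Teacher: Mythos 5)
Your proof is correct and follows essentially the same route as the paper: peel off $\widehat\omega_\ell=\min\{\omega_\ell^-,\omega_\ell^+\}$ to form the flux-carrying part, verify the telescoping identity \eqref{eq:476}, invoke the 1D BP property \eqref{1DBP} with the rescaled time step-sizes under \eqref{eq:tempCFL}, and conclude by convexity of $G$. If anything, your bookkeeping is slightly more careful than the paper's, which loosely calls $\Pi$ a convex combination and asserts $\Pi\in G$, whereas you correctly treat $\Pi$ as a nonnegative combination and verify that all weights in \eqref{eq:476} together sum to one.
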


As direct consequences of 
Theorem \ref{thm:CFL}, we have the following two corollaries.

\begin{corollary}[BP via Zhang--Shu convex decomposition]
If for all $i$ and $j$, the solution polynomial $p_{ij}$ satisfies (\ref{eq:478a}) and
$
p_{ij}(x,y) \in G$ for all $(x,y) \in \mathbb{S}^{\tt Zhang-Shu}_{ij}
$, then the high-order scheme \eqref{eq:171} preserves 
$\bar u_{ij}^{n+1}\in G$ under the BP CFL condition
\begin{equation}\label{eq:CFL-ZS}
	\left( \frac{a_1}{\dx}+\frac{a_2}{\dy} \right)\Delta t \le \omega_1^{\tt GL}\,c_0 
=\frac{c_0}{ L(L-1) } \quad \mbox{with} \quad L=\left \lceil \frac{k+3}{2} \right\rceil.
\end{equation}
\end{corollary}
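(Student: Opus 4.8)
The plan is to obtain this corollary as an immediate specialization of Theorem~\ref{thm:CFL} to the Zhang--Shu decomposition \eqref{eq:411}; the only real work is to identify the weights and to carry out one short cancellation. First I would match \eqref{eq:411} term-by-term against the general feasible form \eqref{2Ddecomp}. The boundary terms give $\omega_1^-=\omega_1^+=\kappa_1\,\omega_1^{\tt GL}$ and $\omega_2^-=\omega_2^+=\kappa_2\,\omega_1^{\tt GL}$, and the internal nodes are precisely the classic set $\mathbb{S}^{\tt Zhang-Shu}_{ij}$ of \eqref{eq:450}. Since $\kappa_1,\kappa_2>0$ and all Gauss and Gauss--Lobatto weights are positive, while the internal nodes lie in $\Omega_{ij}$, the decomposition \eqref{eq:411} meets the three requirements of Definition~\ref{def:2D_FCAD}. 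Consequently, the corollary's hypotheses---\eqref{eq:478a} together with $p_{ij}\in G$ on $\mathbb{S}^{\tt Zhang-Shu}_{ij}$---are exactly conditions \eqref{eq:478a} and \eqref{eq:478b} for this particular decomposition, so Theorem~\ref{thm:CFL} applies directly.

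Next I would substitute these weights into the BP CFL condition \eqref{2Dhst_CFL_all}. Because the $\pm$ pairs coincide, the four-term minimum reduces to $\Delta t \le c_0 \min\{ \kappa_1\omega_1^{\tt GL}\dx/a_1,\ \kappa_2\omega_1^{\tt GL}\dy/a_2\}$. The decisive---though elementary---step is then to insert the definitions of $\kappa_1$ and $\kappa_2$: in the first argument the factor $a_1/\dx$ coming from $\kappa_1$ cancels the $\dx/a_1$, and the second argument simplifies in the same way, so that \emph{both} arguments collapse to the common value $\omega_1^{\tt GL}/(a_1/\dx+a_2/\dy)$. Taking the minimum of two equal quantities and rearranging then yields $(a_1/\dx+a_2/\dy)\Delta t\le c_0\,\omega_1^{\tt GL}$, which is \eqref{eq:CFL-ZS}; the closed form $\omega_1^{\tt GL}=1/(L(L-1))$ with $L=\lceil(k+3)/2\rceil$ was already recorded after \eqref{eq:GL1D}.

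I do not expect a genuine obstacle here, since the statement is a corollary by design. The one point worth emphasizing is the reason the minimum collapses: the weights $\kappa_1,\kappa_2$ are chosen precisely so that the $x$- and $y$-directional constraints in \eqref{2Dhst_CFL_all} balance exactly, leaving neither direction more restrictive than the other. This balancing is the conceptual content of the corollary, and it foreshadows the paper's central question---whether a different, possibly asymmetric, choice of decomposition could relax \eqref{eq:CFL-ZS} further.
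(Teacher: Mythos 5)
Your proposal is correct and is essentially the paper's own argument: the paper presents this corollary as a direct consequence of Theorem~\ref{thm:CFL} applied to the Zhang--Shu decomposition \eqref{eq:411}, whose feasibility is noted in Remark~1, and your weight identification $\omega_1^\pm=\kappa_1\omega_1^{\tt GL}$, $\omega_2^\pm=\kappa_2\omega_1^{\tt GL}$ followed by the cancellation that makes both arguments of the minimum in \eqref{2Dhst_CFL_all} equal to $\omega_1^{\tt GL}/(a_1/\dx+a_2/\dy)$ is exactly the computation the paper leaves implicit. Your closing observation that $\kappa_1,\kappa_2$ are chosen to balance the two directional constraints is also the right way to read the construction.
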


\begin{corollary}[BP via Jiang--Liu convex decomposition]
If for all $i$ and $j$, the solution polynomial $p_{ij}$ satisfies (\ref{eq:478a}) and 
$
p_{ij}(x,y) \in G$ for all $(x,y) \in \mathbb{S}^{\tt Zhang-Shu}_{ij}
$, 
then the high-order scheme \eqref{eq:171} preserves 
$\bar u_{ij}^{n+1}\in G$ under the BP CFL condition
\begin{equation*}
	2 \max \left\{\frac{a_1}{\dx},\frac{a_2}{\dy}\right\}\Delta t \le \omega_1^{\tt GL}\,c_0 =\frac{c_0}{ L(L-1) } \quad \mbox{with} \quad L=\left \lceil \frac{k+3}{2} \right\rceil.
\end{equation*}
\end{corollary}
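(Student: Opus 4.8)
The plan is to recognize this Corollary as an immediate specialization of Theorem~\ref{thm:CFL} to the Jiang--Liu decomposition \eqref{eq:433}, so that the whole task reduces to reading off the four boundary weights $\{\omega_1^\pm,\omega_2^\pm\}$ and then rewriting the resulting BP CFL bound \eqref{2Dhst_CFL_all} into the stated form. First I would invoke the Remark following equation~\eqref{eq:450}, which already certifies that \eqref{eq:433} is a feasible 2D convex decomposition in the sense of Definition~\ref{def:2D_FCAD}, with internal node set $\mathbb{S}^{\tt Zhang-Shu}_{ij}$. Matching \eqref{eq:433} against the general template \eqref{2Ddecomp} term by term, the coefficients of $u_{i-\frac12,q}^{+}$, $u_{i+\frac12,q}^{-}$, $u_{q,j-\frac12}^{+}$, $u_{q,j+\frac12}^{-}$ are each $\omega_q^{\tt G}\,\omega_1^{\tt GL}/2$, so the four boundary weights are all equal:
\[
\omega_1^- = \omega_1^+ = \omega_2^- = \omega_2^+ = \frac{\omega_1^{\tt GL}}{2}.
\]
Since the hypotheses placed on $p_{ij}$ are precisely \eqref{eq:478a} together with membership in $G$ on $\mathbb{S}^{\tt Zhang-Shu}_{ij}$ (which is exactly \eqref{eq:478b} for this node set), all assumptions of Theorem~\ref{thm:CFL} are verified.

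Next I would substitute these weights into the BP CFL condition \eqref{2Dhst_CFL_all}. Because the four boundary weights coincide, the four arguments of the minimum collapse into just two distinct values, giving
\[
\dt \le c_0\,\frac{\omega_1^{\tt GL}}{2}\,\min\left\{\frac{\dx}{a_1},\frac{\dy}{a_2}\right\}.
\]
The final step is the elementary identity $\min\{\dx/a_1,\dy/a_2\}=1/\max\{a_1/\dx,a_2/\dy\}$, valid since all quantities are positive, which rearranges the inequality into
\[
2\max\left\{\frac{a_1}{\dx},\frac{a_2}{\dy}\right\}\dt \le \omega_1^{\tt GL}\,c_0,
\]
and substituting the known value $\omega_1^{\tt GL}=1/(L(L-1))$ with $L=\lceil(k+3)/2\rceil$ yields precisely the claimed bound.

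Since the result is a direct corollary, there is essentially no obstacle beyond careful bookkeeping; the only point requiring slight attention is the algebraic passage from the minimum-over-four-ratios form of \eqref{2Dhst_CFL_all} to the maximum-of-two-ratios form in the statement, a step that hinges entirely on the four boundary weights being equal (in contrast to the Zhang--Shu case, where the weights carry the direction-dependent factors $\kappa_1,\kappa_2$ and instead collapse the two-ratio maximum into the additive combination $a_1/\dx+a_2/\dy$). Keeping track of this distinction is what separates the present Corollary from the preceding one.
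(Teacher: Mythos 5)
Your proof is correct and follows exactly the route the paper intends: the paper states this corollary as a direct consequence of Theorem~\ref{thm:CFL}, and your argument simply makes that specialization explicit by reading off the equal boundary weights $\omega_1^\pm=\omega_2^\pm=\omega_1^{\tt GL}/2$ from \eqref{eq:433} and rewriting \eqref{2Dhst_CFL_all} via $\min\{\dx/a_1,\dy/a_2\}=1/\max\{a_1/\dx,a_2/\dy\}$. The bookkeeping, including the contrast with the Zhang--Shu case where the $\kappa_1,\kappa_2$ factors produce the additive form instead, is accurate.
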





\section{Optimal 2D convex decomposition for $\mathbb{P}^\textnormal{2}$ and $\mathbb{P}^\textnormal{3}$ on rectangular cells}\label{sec:3}

As we have seen, a convex decomposition like (\ref{2Ddecomp}) plays a critical role in constructing 2D high-order BP schemes: the choice of decomposition, in particular, the corresponding weights $\{\omega_1^-, \omega_1^+, \omega_2^-,  \omega_2^+\}$ affect the resulting BP CFL condition (\ref{2Dhst_CFL_all}). While the feasible convex decomposition approaches are not unique, it is natural to seek the {\em optimal convex decomposition} such that the resulting BP CFL condition \eqref{2Dhst_CFL_all} is mildest, i.e., it maximizes 
\begin{equation*}
	\min  \left\{ \frac{\omega_1^- \Delta x}{a_1}, \frac{\omega_1^+ \Delta x}{a_1} , \frac{\omega_2^- \Delta y}{a_2}, \frac{\omega_2^+ \Delta y}{a_2} \right\}.
\end{equation*}

Seeking such an optimal convex decomposition in 2D is challenging. 
We find that the classic Zhang--Shu decomposition 
\eqref{eq:411} and the Jiang--Liu decomposition \eqref{eq:433} both are generally 
not optimal for the $\mathbb P^k$ spaces. 
Moreover, we discover the following novel convex decomposition on $\Omega_{ij}:=[x_{i-\frac12},x_{i+\frac12}]\times[y_{j-\frac12},y_{j+\frac12}]$ for $\mathbb{P}^2$ and $\mathbb{P}^3$.

\begin{decomp}{Optimal 2D convex decomposition} 
	For $p_{ij}\in\mathbb{P}^2$ or $\mathbb P^3$, 
	the cell average $\bar u_{ij}^n$ has the following convex decomposition 
\begin{equation}\label{eq:QR2D}
\bar u_{ij}^n =
\frac{\mu_1}{2} \sum_{q=1}^Q \omega_{q}^{\tt G} \left[
  u_{i-\frac12,q}^{+} +
  u_{i+\frac12,q}^{-} 
\right] +
\frac{\mu_2}{2} \sum_{q=1}^Q \omega_{q}^{\tt G} \left[
  u_{q,j-\frac12}^{+} +
  u_{q,j+\frac12}^{-}
\right] +
\omega\sum_{s} p_{ij}\left( \hat{x}_s,\hat{y}_s \right),
\end{equation}
with the internal nodes
\begin{equation}\label{eq:226}
\mathbb{S}_{ij}^{\tt optimal} = \{(\hat{x}_s,\hat{y}_s)\} =
\begin{dcases}
\; \left( 
x_i, 
y_j \pm \frac{\dy}{2\sqrt{3}}\sqrt{\frac{\phi_*-\phi_2}{\phi_*}}
\right),
& \text{if} \; \phi_1 \ge \phi_2, \\
\; \left(
x_i \pm \frac{\dx}{2\sqrt{3}}\sqrt{\frac{\phi_*-\phi_1}{\phi_*}} ,
y_j
\right),
& \text{if} \; \phi_1 < \phi_2,
\end{dcases}
\end{equation}
where
\begin{equation*}
	\phi_1 = \frac{ a_1}{\dx}, \quad
\phi_2 = \frac{a_2}{\dy}, \quad
\phi_* = \max\{\phi_1,\phi_2\}, \quad
\psi = \phi_1+\phi_2+2\phi_*, \quad
\mu_1 = \frac{\phi_1}{\psi}, \quad
\mu_2 = \frac{\phi_2}{\psi}, \quad
\omega = \frac{\phi_*}{\psi}.
\end{equation*}
\end{decomp}

It can be verified that the proposed 2D convex decomposition (\ref{eq:QR2D}) is feasible and optimal (see Theorem \ref{thm:OPT2DP2}) for both $\mathbb{P}^2$ and $\mathbb{P}^3$. 
As a direct consequence of 
Theorem \ref{thm:CFL}, we have following corollary. 

\begin{corollary}[BP via optimal convex decomposition]
	If for all $i$ and $j$, the solution polynomial $p_{ij}$ satisfies (\ref{eq:478a}) and 
$
p_{ij}(x,y) \in G$ for all $(x,y) \in \mathbb{S}^{\tt optimal}_{ij}
$, then the $\mathbb{P}^2$-based or $\mathbb{P}^3$-based high-order DG scheme \eqref{eq:171} preserves 
	$\bar u_{ij}^{n+1}\in G$ under the BP CFL condition
\begin{equation}\label{eq:CFL2D}
\left[  2\frac{a_1}{\dx}+2\frac{a_2}{\dy}+ 4 \max\left\{ 
\frac{a_1}{\dx},\frac{a_2}{\dy}\right\} \right] \Delta t \le c_0.
\end{equation}
\end{corollary}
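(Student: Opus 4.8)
The plan is to derive this corollary directly from Theorem~\ref{thm:CFL}, by recognizing the optimal decomposition~\eqref{eq:QR2D} as a special instance of the generic feasible decomposition~\eqref{2Ddecomp} and then simplifying the CFL bound~\eqref{2Dhst_CFL_all} for this particular choice of weights. First I would match~\eqref{eq:QR2D} term by term against~\eqref{2Ddecomp}, reading off the boundary weights $\omega_1^- = \omega_1^+ = \mu_1/2$ and $\omega_2^- = \omega_2^+ = \mu_2/2$, together with the internal node set $\mathbb{S}_{ij}^{\tt optimal}$ of~\eqref{eq:226}, each of whose nodes carries the common weight $\omega$. Before invoking the theorem I must confirm that this choice is a \emph{feasible} convex decomposition in the sense of Definition~\ref{def:2D_FCAD}: exactness on $\mathbb{P}^2$ and $\mathbb{P}^3$, positivity of all weights, and containment of the internal nodes in $\Omega_{ij}$. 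Positivity is immediate from $\phi_1,\phi_2>0$ and $\psi = \phi_1+\phi_2+2\phi_* > 0$, while $\mu_1+\mu_2+2\omega = \psi/\psi = 1$ shows the weights sum to one; containment follows because the square-root factor in~\eqref{eq:226} lies in $[0,1)$, so the nodal offset is strictly smaller than $\dx/2$ or $\dy/2$. Exactness is the substantive ingredient and is supplied by the feasibility/optimality result (Theorem~\ref{thm:OPT2DP2}), which I would cite here.

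With the weights identified, I would apply Theorem~\ref{thm:CFL}. Its hypotheses~\eqref{eq:478a} and~\eqref{eq:478b} coincide exactly with the corollary's assumptions on $p_{ij}$ (the internal-node condition now imposed on $\mathbb{S}_{ij}^{\tt optimal}$), so the theorem yields $\bar u_{ij}^{n+1}\in G$ provided $\Delta t \le c_0 \min\{\mu_1\dx/(2a_1),\, \mu_2\dy/(2a_2)\}$; here the four-entry minimum in~\eqref{2Dhst_CFL_all} has already collapsed to two entries because $\omega_1^- = \omega_1^+$ and $\omega_2^- = \omega_2^+$.

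The remaining step is the simplification, which is the one point worth flagging. Substituting $\mu_1 = \phi_1/\psi$ with $\phi_1 = a_1/\dx$ gives $\mu_1\dx/(2a_1) = 1/(2\psi)$, and likewise $\mu_2\dy/(2a_2) = 1/(2\psi)$. The key observation --- indeed the reason these are the right weights --- is that the two surviving entries are \emph{equal} irrespective of the case split in~\eqref{eq:226}, so the minimum equals exactly $1/(2\psi)$ and the bound becomes $\Delta t \le c_0/(2\psi)$. Rewriting $2\psi = 2\tfrac{a_1}{\dx} + 2\tfrac{a_2}{\dy} + 4\max\{\tfrac{a_1}{\dx},\tfrac{a_2}{\dy}\}$ and rearranging yields~\eqref{eq:CFL2D}. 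There is no genuine obstacle beyond this bookkeeping; the only place demanding care is verifying the equality of the two minimum entries, since this is precisely what renders the CFL number independent of the direction in which the maximal speed is attained.
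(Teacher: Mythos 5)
Your proposal is correct and matches the paper's route exactly: the paper states this corollary as a direct consequence of Theorem~\ref{thm:CFL} applied to the decomposition \eqref{eq:QR2D} (whose feasibility is asserted in Theorem~\ref{thm:OPT2DP2}), with $\omega_1^\pm=\mu_1/2$, $\omega_2^\pm=\mu_2/2$, and the resulting minimum collapsing to $1/(2\psi)$. Your write-up simply fills in this bookkeeping, including the key observation that both surviving entries of the minimum equal $1/(2\psi)$, which is precisely what yields \eqref{eq:CFL2D}.
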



In Table \ref{tab:330}, we list and compare the corresponding BP CFL conditions and the internal node sets of decompositions (\ref{eq:QR2D}), (\ref{eq:411}) and (\ref{eq:433}) for $\mathbb{P}^2$ and $\mathbb{P}^3$. We observe that 
our novel convex decomposition (\ref{eq:QR2D}) has some remarkable advantages, as summarized in the following remarks. 

\begin{remark1}[Advantage in mildest BP CFL condition]\label{rem1}
One can observe from Table \ref{tab:330} that the proposed convex decomposition (\ref{eq:QR2D}) achieves a notably milder BP CFL condition than the existing ones, {\rm i.e.}, our BP CFL condition (\ref{eq:CFL2D}) is weaker than (\ref{eq:411}) and (\ref{eq:433}) respectively obtained via the Zhang--Shu and Jiang--Liu convex decompositions.  In fact, for the $\mathbb{P}^2$ or $\mathbb{P}^3$ space, no other 2D feasible convex decomposition can achieve an even milder BP CFL condition than (\ref{eq:CFL2D}), {\rm i.e.}, the proposed convex decomposition (\ref{eq:QR2D}) is {\em optimal}. 
This will be theoretically proved in Theorem \ref{thm:OPT2DP2}. 
\end{remark1}

\begin{remark1}[Advantage in fewer nodes]\label{rem2}
The internal node set $\mathbb{S}_{ij}^{\tt optimal}$ of the optimal convex decomposition (\ref{eq:QR2D}) contains only two nodes, which merge to a single node  $(x_i,y_j)$ in case of $\phi_1=\phi_2$. In comparison, the classic convex decomposition (\ref{eq:411}) or (\ref{eq:433}) needs much more internal nodes (approximately $2Q(L-2)$ in total). These two internal node sets, $\mathbb{S}_{ij}^{\tt Zhang-Shu}$ and $\mathbb{S}_{ij}^{\tt optimal}$, are shown in Figure \ref{fig:374} for further comparison. When the local scaling BP limiter is performed at the internal nodes in all computational cells, using the optimal convex decomposition (\ref{eq:QR2D}) reduces the computational cost of the BP limiting procedure. 
\end{remark1}

\begin{remark1}[Easy implementation]
It is worth emphasizing that one 
 only requires a slight and local modification to an existing code to enjoy the above-mentioned advantages of our optimal convex decomposition. Specifically, one only needs to replace the classic internal node set $\mathbb{S}_{ij}^{\tt Zhang-Shu}$ with the optimal internal node set $\mathbb{S}_{ij}^{\tt optimal}$ 
 in the BP limiting procedure, and then the theoretical BP CFL condition is improved to \eqref{eq:CFL2D}. 
\end{remark1}

\begin{table}[h]
\centering
\caption{The theoretical BP CFL conditions and the internal node sets of the optimal convex decomposition (\ref{eq:QR2D}) and the convex decompositions (\ref{eq:411}) and (\ref{eq:433}) in 2D for the $\mathbb{P}^2$ and $\mathbb{P}^3$ spaces.}\label{tab:330}
\renewcommand\arraystretch{1.7}
\begin{tabular}{ lrrrr} 
 \toprule[1.5pt]
  & BP CFL condition & BP CFL condition & Internal & Internal \\
 & general case & $\frac{\dx}{a_1} = \frac{\dy}{a_2} = h$ & $\mathbb{P}^2$ nodes & $\mathbb{P}^3$ nodes \\ 
 
 \midrule[1.5pt]
 
Optimal & 
$\left[ 2 \frac{a_1}{\dx}+2 \frac{a_2}{\dy}+4 \max\left\{ 
\frac{a_1}{\dx},\frac{a_2}{\dy} \right\} \right] \Delta t \le c_0$ 
& $\Delta t \le \frac{c_0}{8} h$ & 1 $\sim$ 2 & 1 $\sim$ 2 \\ 
 Zhang \& Shu \cite{zhang2010} & $\left[6\frac{a_1}{\dx}+6\frac{a_2}{\dy}\right]\Delta t \le c_0$ & $\Delta t \le \frac{c_0}{12} h$ & 5 & 8 \\ 
 Jiang \& Liu \cite{jiang2018invariant} & $\left[12\max\left\{\frac{a_1}{\dx},\frac{a_2}{\dy} \right\}\right]\Delta t \le c_0$ & $\Delta t \le \frac{c_0}{12}h$ & 5 & 8\\ 

\bottomrule[1.5pt]
\end{tabular}
\end{table}

\begin{figure}
\centerline{
\begin{subfigure}{0.24\textwidth}
    \includegraphics[width=\textwidth]{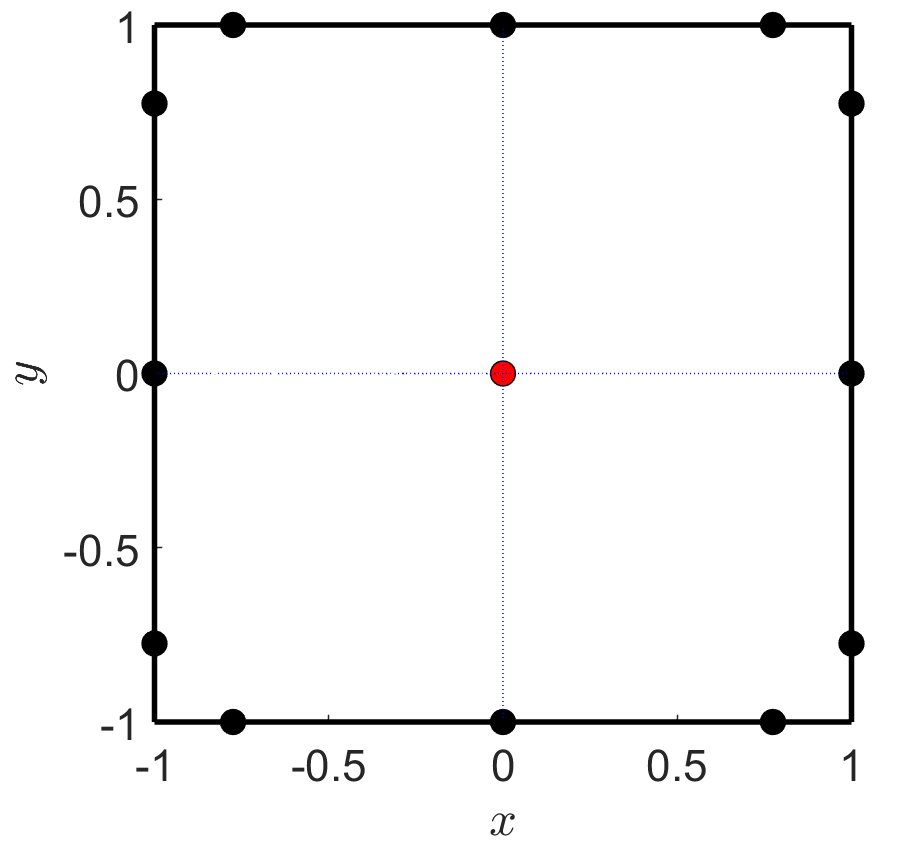}
    \caption{Boundary nodes (black) and optimal internal nodes $\mathbb{S}_{ij}^{\tt optimal}$ (red) for $\mathbb{P}^2$.}
\end{subfigure}
\hfill
\begin{subfigure}{0.24\textwidth}
    \includegraphics[width=\textwidth]{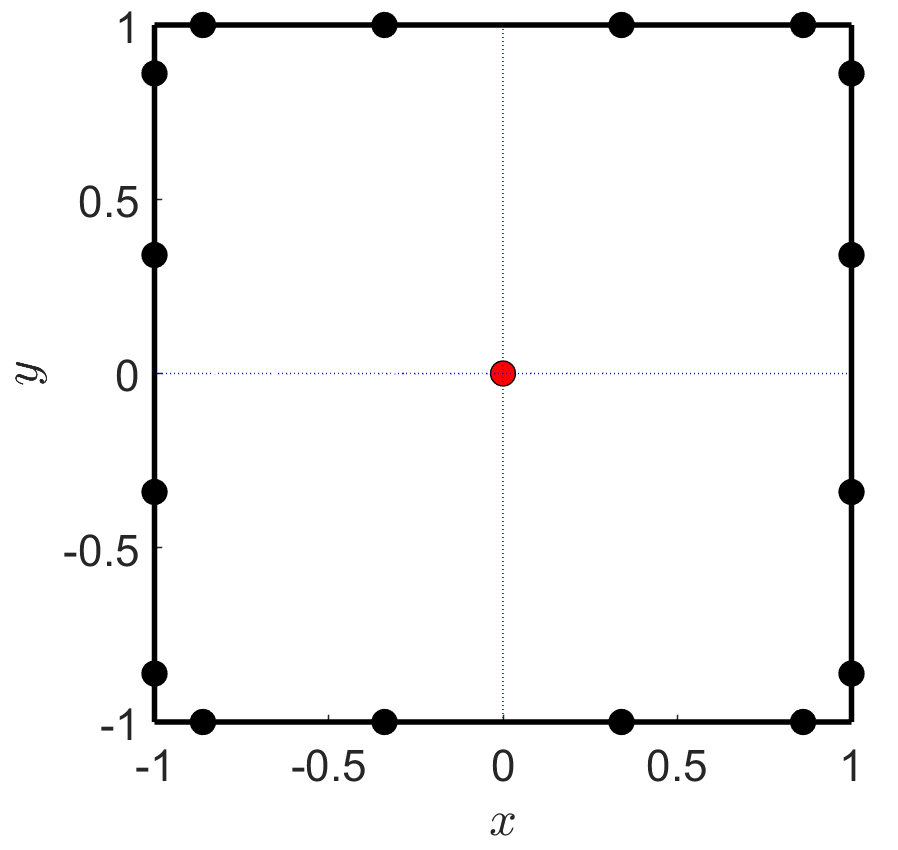}
    \caption{Boundary nodes (black) and optimal internal nodes $\mathbb{S}_{ij}^{\tt optimal}$ (red) for $\mathbb{P}^3$.}
\end{subfigure}
\hfill
\begin{subfigure}{0.24\textwidth}
    \includegraphics[width=\textwidth]{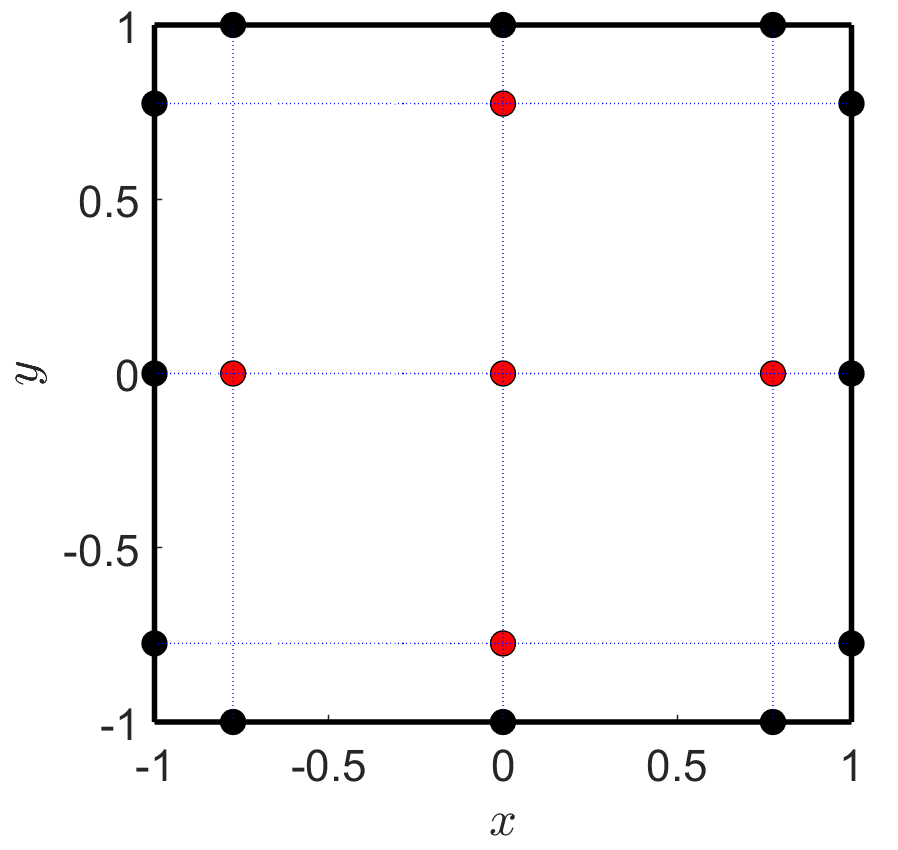}
    \caption{Boundary nodes (black) and classic internal nodes $\mathbb{S}_{ij}^{\tt Zhang-Shu}$ (red) for $\mathbb{P}^2$.}
\end{subfigure}
\hfill
\begin{subfigure}{0.24\textwidth}
    \includegraphics[width=\textwidth]{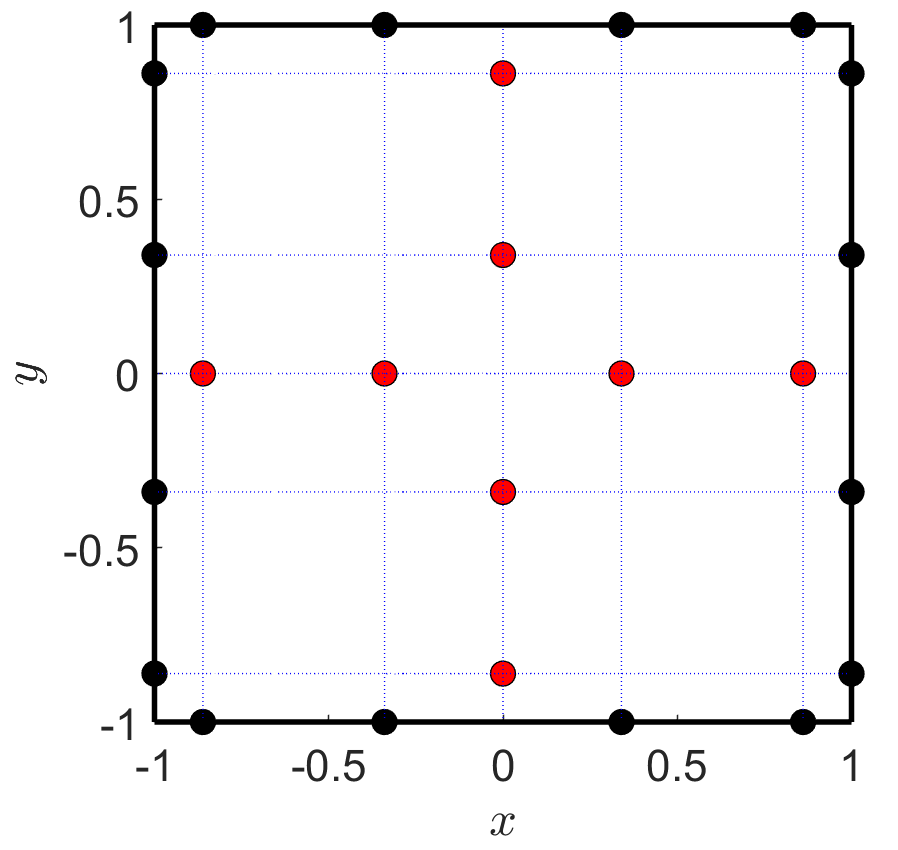}
    \caption{Boundary nodes (black) and classic internal nodes $\mathbb{S}_{ij}^{\tt Zhang-Shu}$ (red) for $\mathbb{P}^3$.}
\end{subfigure}
}
\caption{Nodes of the convex decompositions (\ref{eq:QR2D}) and the classic convex decomposition (\ref{eq:411}) on $\Omega_{ij} = [-1,1]^2$, for the $\mathbb{P}^2$ and $\mathbb{P}^3$ spaces, in the case of $\frac{\dx}{a_1} = \frac{\dy}{a_2}$. }
\label{fig:374} 
\end{figure}

\begin{theorem}\label{thm:OPT2DP2}
	For both $\mathbb{P}^2$ and $\mathbb{P}^3$ spaces, 
	the 2D convex decomposition (\ref{eq:QR2D}) is optimal among all feasible candidates. 
\end{theorem}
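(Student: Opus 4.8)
The plan is to recast ``optimal'' as a constrained-optimization problem and then bound the objective using the exactness (moment) constraints shared by every feasible decomposition. By Theorem~\ref{thm:CFL}, attaining the mildest BP CFL condition is equivalent to maximizing
\[
T := \min\left\{\frac{\omega_1^-}{\phi_1},\ \frac{\omega_1^+}{\phi_1},\ \frac{\omega_2^-}{\phi_2},\ \frac{\omega_2^+}{\phi_2}\right\}
\]
over all feasible decompositions \eqref{2Ddecomp}, because the CFL bound there is exactly $\Delta t\le c_0 T$. I would first rescale to the reference cell $[-1,1]^2$ and, using the $x\leftrightarrow y$ symmetry of the problem, assume without loss of generality that $\phi_1\ge\phi_2$, so that $\phi_*=\phi_1$.

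Next I would extract the linear constraints on the weights from condition (i) of Definition~\ref{def:2D_FCAD}. Since the boundary part of \eqref{2Ddecomp} uses fixed Gauss nodes on the edges with $q$-independent coefficients $\omega_1^\pm,\omega_2^\pm$, and the $Q$-point Gauss rule is exact for the edge restrictions of any $p\in\mathbb P^k$, exactness for a monomial $p$ reduces to matching its cell average against $\omega_1^-\overline{p}_{\mathrm L}+\omega_1^+\overline{p}_{\mathrm R}+\omega_2^-\overline{p}_{\mathrm B}+\omega_2^+\overline{p}_{\mathrm T}+\sum_s\omega_s\,p(\hat x_s,\hat y_s)$, where $\overline{p}_\bullet$ is the average of $p$ over the indicated edge. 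The decisive constraints come from $p=x^2$ and $p=y^2$: writing $A:=\omega_1^-+\omega_1^+$ and $B:=\omega_2^-+\omega_2^+$, they read
\[
A+\tfrac13 B+\sum_s\omega_s\hat x_s^2=\tfrac13,\qquad \tfrac13 A+B+\sum_s\omega_s\hat y_s^2=\tfrac13 .
\]

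The upper bound then follows quickly. Since every $\omega_s>0$, the internal sums are nonnegative, so the first identity yields $A+\tfrac13 B\le\tfrac13$. The definition of $T$ forces $\omega_1^\pm\ge\phi_1 T$ and $\omega_2^\pm\ge\phi_2 T$, hence $A\ge 2\phi_1 T$ and $B\ge 2\phi_2 T$; substituting gives $2\phi_1 T+\tfrac23\phi_2 T\le\tfrac13$, i.e. $T\le(6\phi_1+2\phi_2)^{-1}$. Together with the symmetric bound from $p=y^2$, this establishes, for every feasible decomposition of $\mathbb P^2$ (and hence of $\mathbb P^3\supset\mathbb P^2$),
\[
T\le\big(2\phi_1+2\phi_2+4\max\{\phi_1,\phi_2\}\big)^{-1}.
\]
I would then verify that \eqref{eq:QR2D} is feasible and attains this value: positivity of $\mu_1/2,\mu_2/2,\omega$ is immediate, the two internal nodes lie in $\Omega_{ij}$ since the offset in \eqref{eq:226} is strictly below $\Delta y/2$ (resp. $\Delta x/2$), and exactness on the monomial bases of $\mathbb P^2$ and $\mathbb P^3$ is a short computation with $\omega_1^\pm=\mu_1/2$, $\omega_2^\pm=\mu_2/2$ and the nodes \eqref{eq:226}, in which all odd and mixed moments vanish by symmetry while the $x^2,y^2$ moments are matched by the choice of $\psi,\mu_1,\mu_2,\omega$. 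For this decomposition $T=\min\{\mu_1/(2\phi_1),\mu_2/(2\phi_2)\}=1/(2\psi)$, which coincides with the upper bound, proving optimality.

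I expect the crux to be the upper-bound step: recognizing that, among all moment constraints, only the two quadratic ones ($p=x^2$ and $p=y^2$) control the objective, and that discarding the nonnegative internal contributions produces exactly the tight inequalities $A+\tfrac13 B\le\tfrac13$ and $\tfrac13 A+B\le\tfrac13$. All remaining constraints (the odd and mixed moments, and the additional cubic moments for $\mathbb P^3$) are slack for the objective and only need to be satisfiable, which is precisely why two symmetrically placed internal nodes suffice. Once the node placement in \eqref{eq:226} is in hand, the achievability computation is routine.
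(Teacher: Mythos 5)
Your proposal is correct and follows essentially the same route as the paper's own proof: both hinge on the exactness constraints for the centered quadratic monomials $(x-x_i)^2$ and $(y-y_j)^2$, discard the nonnegative internal-node contributions to obtain $3(\omega_1^-+\omega_1^+)+(\omega_2^-+\omega_2^+)\le 1$ and its symmetric counterpart, and combine these with the CFL objective to match the bound attained by \eqref{eq:QR2D}. The only differences are cosmetic: you argue directly (with a WLOG symmetry reduction) where the paper argues by contradiction, and you spell out the achievability computation that the paper dismisses as easily verified.
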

\begin{proof}
	It can be easily verified that the 2D convex decomposition (\ref{eq:QR2D}) is feasible for $\mathbb{P}^2$ and $\mathbb{P}^3$. We will prove its optimality by contradiction. Assume that there is another feasible convex decomposition of the form
	\begin{equation}\label{eq:hyp}
		\bar {u}_{ij}^{n} =  
		\sum_{q=1}^Q \omega_{q}^{\tt G}  \left[  
		\omega_1^- u_{i-\frac{1}{2}, q}^{+} +
		\omega_1^+ u_{i+\frac{1}{2}, q}^{-} +
		\omega_2^- u_{q, j-\frac{1}{2}}^{+} +
		\omega_2^+ u_{q, j+\frac{1}{2}}^{-}
		\right]
		+ \sum_{s=1}^S \omega_s p_{ij} ( x_{ij}^{(s)}, y_{ij}^{(s)} ),
	\end{equation}
	which achieves a BP CFL condition milder than (\ref{eq:QR2D}), that is, 	
\begin{equation*}
		c_0 \min \left\{ \frac{\omega_1^- \Delta x}{a_1}, \frac{\omega_1^+ \Delta x}{a_1} , \frac{\omega_2^- \Delta y}{a_2}, \frac{\omega_2^+ \Delta y}{a_2} \right\} 
	>
	c_0 \min \left\{
	\frac{\mu_1 \dx}{2 a_1},
	\frac{\mu_1 \dx}{2 a_1},
	\frac{\mu_2 \dy}{2 a_2},
	\frac{\mu_2 \dy}{2 a_2}
	\right\}
	=
	\frac{c_0}{2 \frac{a_1}{\dx}+2 \frac{a_2}{\dy}+4 \max\left\{ 
		\frac{a_1}{\dx},\frac{a_2}{\dy}\right\}}.
\end{equation*}
	In case of $\frac{a_1}{\dx} \ge \frac{a_2}{\dy}$, we have
	\begin{equation}\label{eq:541}
		\begin{aligned}
			1 & < \min \left\{ 
			\frac{\omega_1^- \Delta x}{a_1}, \frac{\omega_1^+ \Delta x}{a_1} , 
			\frac{\omega_2^- \Delta y}{a_2}, \frac{\omega_2^+ \Delta y}{a_2} 
			\right\} \left[6 \frac{a_1}{\dx}+2 \frac{a_2}{\dy} \right] \\
			& \le 
			3 \frac{\omega_1^- \Delta x}{a_1} \frac{a_1}{\dx}+
			3 \frac{\omega_1^+ \Delta x}{a_1} \frac{a_1}{\dx}+
			\frac{\omega_2^- \Delta y}{a_2} \frac{a_2}{\dy}+
			\frac{\omega_2^+ \Delta y}{a_2} \frac{a_2}{\dy}
			= 3(\omega_1^{+}+\omega_1^{-}) + (\omega_2^{+}+\omega_2^{-}).
		\end{aligned}
	\end{equation}
	In case of $\frac{a_1}{\dx} < \frac{a_2}{\dy}$, we have
	\begin{equation}\label{eq:556}
		\begin{aligned}
			1 & < \min \left\{ 
			\frac{\omega_1^- \Delta x}{a_1}, \frac{\omega_1^+ \Delta x}{a_1} , 
			\frac{\omega_2^- \Delta y}{a_2}, \frac{\omega_2^+ \Delta y}{a_2} 
			\right\} 
			\left[2 \frac{a_1}{\dx}+6 \frac{a_2}{\dy} \right] \\
			& \le
			\frac{\omega_1^- \Delta x}{a_1} \frac{a_1}{\dx}+
			\frac{\omega_1^+ \Delta x}{a_1} \frac{a_1}{\dx}+
			3 \frac{\omega_2^- \Delta y}{a_2} \frac{a_2}{\dy}+
			3 \frac{\omega_2^+ \Delta y}{a_2} \frac{a_2}{\dy}
			=(\omega_1^{+}+\omega_1^{-}) + 3(\omega_2^{+}+\omega_2^{-}).
		\end{aligned}
	\end{equation}
	No matter the hypothetical convex decomposition (\ref{eq:hyp}) is feasible for $\mathbb{P}^2$ or $\mathbb{P}^3$, it should hold exactly for $p_{ij}(x,y)=(x-x_i)^2$ and $p_{ij}(x,y)=(y-y_j)^2$. This gives   
\begin{align*}
	\frac{\dx^2}{12} &= 
(\omega_1^{+}+\omega_1^{-}) \frac{\dx^2}{4} + 
(\omega_2^{+}+\omega_2^{-}) \frac{\dx^2}{12} +
\sum_{s=1}^N \omega_s ({x}_{ij}^{(s)}-x_i)^2,
\\
\frac{\dy^2}{12} &= 
(\omega_1^{+}+\omega_1^{-}) \frac{\dy^2}{12} + 
(\omega_2^{+}+\omega_2^{-}) \frac{\dy^2}{4} +
\sum_{s=1}^N \omega_s ({y}_{ij}^{(s)}-y_j)^2,
\end{align*}
	implying  
	\begin{equation}\label{eq:553}
		3(\omega_1^{+}+\omega_1^{-}) + (\omega_2^{+}+\omega_2^{-}) \le 1
		\quad \text{and} \quad
		(\omega_1^{+}+\omega_1^{-}) + 3(\omega_2^{+}+\omega_2^{-}) \le 1,
	\end{equation}
	 which contradict with either (\ref{eq:541}) or (\ref{eq:556}). Hence the assumption is incorrect, and decomposition (\ref{eq:QR2D}) is optimal.
\end{proof}

\begin{remark1}
	The standard CFL condition for linear stability of the $\mathbb P^k$-based DG method with a $(k+1)$-stage $(k+1)$-order Runge--Kutta (RK) time discretization \cite{cockburn2001runge} is given by the following empirical formula   
	\begin{equation}\label{eq:LS}
		\left(   \frac{a_1}{\dx}+ \frac{a_2}{\dy} \right) \Delta t \le \frac{1}{2k+1}.  
	\end{equation} 
	Table \ref{tab:specase} gives a comparison of different CFL conditions in the special case of $\frac{\dx}{a_1} = \frac{\dy}{a_2} = h$ for the $\mathbb P^2$-based (third-order) and  $\mathbb P^3$-based (fourth-order) DG methods. 
	 One can see that if $c_0=1$, 
	 the optimal BP CFL condition \eqref{eq:CFL2D} of the DG schemes (with the BP limiter) is even weaker than the standard one \eqref{eq:LS}. 


  	\begin{table}[htbp] 
  		\centering
  		\caption{Comparison of different CFL conditions in the special case of $\frac{\dx}{a_1} = \frac{\dy}{a_2} = h$.  }
  		\label{tab:specase}
  		\renewcommand\arraystretch{1.6}
  		\setlength{\tabcolsep}{6mm}{
  			\begin{tabular}{cccccc}
  				\toprule[1.5pt]
  				
  				\multirow{2}{*}{} & 
  				\multirow{2}{*}{linear stability} &
  				\multicolumn{2}{c}{ $c_0 = 1$ } & 
  				\multicolumn{2}{c}{ $c_0 = {1}/{2}$} \\
  				\cmidrule(r){3-4} \cmidrule(l){5-6} 
  				& & 
  				optimal & classic &
  				optimal & classic \\
  				
  				\midrule[1.5pt]
  				
  				$\mathbb{P}^2$ & $\dt \leq \frac{1}{10}h$ &  
  					
  				\multirow{2}{*}{$\dt \leq \frac{1}{8}h$} &
  				\multirow{2}{*}{$\dt \leq \frac{1}{12}h$} &
  				\multirow{2}{*}{$\dt \leq \frac{1}{16}h$} &
  				\multirow{2}{*}{$\dt \leq \frac{1}{24}h$} \\
 				
  				$\mathbb{P}^3$ & $\dt \leq \frac{1}{14}h$ &  & & & \\
  				
  				\bottomrule[1.5pt]
  			\end{tabular}
  		}
  	\end{table}
  	
\end{remark1}

\begin{remark1}
We would like to clarify that the optimal BP CFL condition \eqref{eq:CFL2D} is merely the best among all those achieved via feasible convex decomposition. It does not mean that such an optimal condition is always sharp or necessary, as other possible analysis approaches may give perhaps weaker BP conditions.
\end{remark1}

\section{Optimal 3D convex decomposition for $\mathbb{P}^\textnormal{2}$ and $\mathbb{P}^\textnormal{3}$ on cuboid cells}

The proposed 2D optimal convex decomposition (\ref{eq:QR2D}) can be extended to 3D and higher dimensions. We denote the maximum characteristic speeds in the $x$-, $y$- and $z$-directions by $a_1$, $a_2$, and $a_3$, respectively. 
Let $\Omega_{ij\ell}=[x_{i-\frac12},x_{i+\frac12}]\times[y_{j-\frac12},y_{j+\frac12}] \times [z_{\ell-\frac12},z_{\ell+\frac12}]$ be a cuboid cell, with $\Delta x$, $\Delta y$, and $\Delta z$ denoting its lengths in the $x$-, $y$- and $z$-directions, respectively.

\begin{decomp}{Optimal 3D convex decomposition} 
For $p_{ij\ell} \in \mathbb P^2$ or $\mathbb P^3$, the {\em optimal} 3D convex decomposition on $\Omega_{ij\ell}$ is given by
\begin{equation}\label{eq:QR3D}
\begin{aligned}
\bar u_{ij\ell}
& =
\frac{\mu_1}{2} \sum_{q=1}^{Q}\sum_{r=1}^{Q} \omega_{q}^{\tt G} \omega_{r}^{\tt G} 
\left[
  u_{i-\frac12,q,r}^{+} +
  u_{i+\frac12,q,r}^{-} 
\right] +
\frac{\mu_2}{2} \sum_{q=1}^{Q}\sum_{r=1}^{Q} \omega_{q}^{\tt G} \omega_{r}^{\tt G} 
\left[
  u_{r,j-\frac12,q}^{+} +
  u_{r,j+\frac12,q}^{-}
\right]  \\
& +
\frac{\mu_3}{2} \sum_{q=1}^{Q}\sum_{r=1}^{Q} \omega_{q}^{\tt G} \omega_{r}^{\tt G} 
\left[
  u_{q,r,\ell-\frac12}^{+} +
  u_{q,r,\ell+\frac12}^{-}
\right] + 
\frac{\omega}{2}\sum_{s} p_{ij\ell}\left( \hat{x}_s,\hat{y}_s,\hat{z}_s \right),
\end{aligned}
\end{equation}
where $\bar u_{ij\ell}$ denotes the cell average of $p_{ij\ell}$ over $\Omega_{ij\ell}$, and  
\begin{equation*}
	u_{i\pm\frac12,q,r}^{\mp} = p_{ij\ell} \big( x_{i\pm\frac12}  , y_{j,q}^{\tt G}, z_{\ell,r}^{\tt G} \big), \quad
u_{r,j\pm\frac12,q}^{\mp} = p_{ij\ell} \big( x_{i,r}^{\tt G}, y_{j\pm\frac12}  , z_{\ell,q}^{\tt G} \big), \quad 
u_{q,r,k\pm\frac12}^{\mp} = p_{ij\ell} \big( x_{i,q}^{\tt G}, y_{j,r}^{\tt G}  , z_{\ell\pm\frac12}   \big).
\end{equation*}
In \eqref{eq:QR3D}, the weights $\mu_1$, $\mu_2$, $\mu_3$, and $\omega$ are given by
\begin{equation*}
	\mu_1 = \frac{\phi_1}{\psi}, \quad
\mu_2 = \frac{\phi_2}{\psi}, \quad
\mu_3 = \frac{\phi_3}{\psi}, \quad
\omega = \frac{\phi_*}{\psi}
\end{equation*}
with 
\begin{equation*}
	\phi_1 = a_1/\dx, \quad
\phi_2 = a_2/\dy, \quad
\phi_3 = a_3/\dz, \quad
\psi = \phi_1+\phi_2+\phi_3+2\phi_*, \quad
\phi_* = \max \{\phi_1,\phi_2,\phi_3\}, 
\end{equation*}
and the internal nodes are given by
\begin{equation*}
\mathbb{S}_{ij\ell}^{\tt optimal} =	\left\{ \big( \hat{x}_{s},\hat{y}_{s},\hat{z}_{s} \big) \right\} = 
\begin{dcases}
~ \left(
	x_i, 
	y_j \pm \frac{\dy}{\sqrt{6}} \sqrt{\frac{\phi_*-\phi_2}{\phi_*}} ,
	z_\ell 
\right) 
\; \text{and} \;
\left(
	x_i,
	y_j,
	z_\ell \pm \frac{\dz}{\sqrt{6}} \sqrt{\frac{\phi_*-\phi_3}{\phi_*}}
\right),
& \text{if} \; \phi_1 = \max \{ \phi_1,\phi_2,\phi_3 \},\\
~
\left( 
	x_i , 
	y_j ,
	z_\ell \pm \frac{\dz}{\sqrt{6}} \sqrt{\frac{\phi_*-\phi_3}{\phi_*}}
\right)
\; \text{and} \;
\left(
	x_i \pm \frac{\dx}{\sqrt{6}} \sqrt{\frac{\phi_*-\phi_1}{\phi_*}},
	y_j,
	z_\ell 
\right),
& \text{if} \; \phi_2 = \max\{\phi_1,\phi_2,\phi_3 \},\\
~ \left(
	x_i \pm \frac{\dx}{\sqrt{6}} \sqrt{\frac{\phi_*-\phi_1}{\phi_*}},
	y_j,
	z_\ell 
\right)
\; \text{and} \;
\left( 
	x_i , 
	y_j \pm \frac{\dy}{\sqrt{6}} \sqrt{\frac{\phi_*-\phi_2}{\phi_*}},
	z_\ell  
\right),
& \text{if} \; \phi_3 = \max\{\phi_1,\phi_2,\phi_3 \}.
\end{dcases}
\end{equation*}
\end{decomp}

\begin{theorem}\label{thm:OPT3DP2}
	For both $\mathbb{P}^2$ and $\mathbb{P}^3$ spaces, 
the 3D convex decomposition (\ref{eq:QR3D}) is optimal among all feasible candidates. 	
\end{theorem}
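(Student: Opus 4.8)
The plan is to replicate the two-part structure of the proof of Theorem~\ref{thm:OPT2DP2}: a routine feasibility check for (\ref{eq:QR3D}), followed by an optimality-by-contradiction argument driven by a handful of quadratic test polynomials. For feasibility, the weights $\mu_1/2,\mu_2/2,\mu_3/2,\omega/2$ are manifestly positive (as $\phi_1,\phi_2,\phi_3,\phi_*,\psi>0$), and they sum to one since $\mu_1+\mu_2+\mu_3+2\omega=(\phi_1+\phi_2+\phi_3+2\phi_*)/\psi=1$, the internal term carrying four nodes each of weight $\omega/2$. Each internal node lies in $\Omega_{ij\ell}$ because $0\le(\phi_*-\phi_i)/\phi_*<1$ makes the offsets $\frac{\Delta\cdot}{\sqrt6}\sqrt{(\phi_*-\phi_i)/\phi_*}$ real and strictly smaller than half the corresponding edge length. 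For exactness on $\mathbb P^2$ and $\mathbb P^3$, I would exploit the $\pm$ reflection symmetry of (\ref{eq:QR3D}) in each coordinate: under $x\mapsto 2x_i-x$ (and analogously for $y,z$) the weighted node set is invariant, so every monomial of total degree $\le 3$ that is odd in some variable is reproduced trivially (both sides vanish). Only the four even monomials $1,(x-x_i)^2,(y-y_j)^2,(z-z_\ell)^2$ remain; the first gives the weight-sum condition, and the choice of offsets is precisely what reproduces the three diagonal quadratics (the same computation that pins down $\sqrt{(\phi_*-\phi_i)/\phi_*}$).

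For optimality, note that the boundary weights of (\ref{eq:QR3D}) are $\omega_1^\pm=\mu_1/2$, $\omega_2^\pm=\mu_2/2$, $\omega_3^\pm=\mu_3/2$, so each of the six quantities $\omega_i^\pm\Delta_i/a_i$ equals $1/(2\psi)$, and a straightforward 3D analogue of Theorem~\ref{thm:CFL} yields the BP CFL number $c_0/(2\psi)$. Suppose, for contradiction, that some feasible decomposition (with general, possibly asymmetric, boundary weights $\omega_i^\pm$ and arbitrary internal nodes) attains a strictly milder condition, namely
\[
\min\left\{\frac{\omega_1^-\dx}{a_1},\frac{\omega_1^+\dx}{a_1},\frac{\omega_2^-\dy}{a_2},\frac{\omega_2^+\dy}{a_2},\frac{\omega_3^-\dz}{a_3},\frac{\omega_3^+\dz}{a_3}\right\}>\frac{1}{2\psi}.
\]

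The key step is to test this hypothetical decomposition on the three diagonal quadratics. Each of $(x-x_i)^2,(y-y_j)^2,(z-z_\ell)^2$ is \emph{constant} on the two faces normal to its own axis and is integrated \emph{exactly} by the $Q$-point Gauss rule on the four transverse faces (exactness for degree $2$ needs only $Q\ge2$, which holds); since the internal contribution $\sum_s\omega_s(\cdot)^2\ge0$, no symmetry of the hypothetical decomposition is required and exactness forces
\begin{align*}
3(\omega_1^-+\omega_1^+)+(\omega_2^-+\omega_2^+)+(\omega_3^-+\omega_3^+)&\le 1,\\
(\omega_1^-+\omega_1^+)+3(\omega_2^-+\omega_2^+)+(\omega_3^-+\omega_3^+)&\le 1,\\
(\omega_1^-+\omega_1^+)+(\omega_2^-+\omega_2^+)+3(\omega_3^-+\omega_3^+)&\le 1.
\end{align*}
I then split into three cases according to which $\phi_i$ equals $\phi_*$. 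If $\phi_1=\phi_*$, then $2\psi=6\phi_1+2\phi_2+2\phi_3$, and bounding each $\omega_i^\pm\Delta_i/a_i$ below by the common minimum gives $1<\min\{\cdots\}\,[6\phi_1+2\phi_2+2\phi_3]\le 3(\omega_1^-+\omega_1^+)+(\omega_2^-+\omega_2^+)+(\omega_3^-+\omega_3^+)$, contradicting the first inequality. The cases $\phi_2=\phi_*$ and $\phi_3=\phi_*$ contradict the second and third inequalities in the same fashion. Hence no milder feasible decomposition exists and (\ref{eq:QR3D}) is optimal.

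I expect the main obstacle to be organizational rather than conceptual. The optimality half transfers almost verbatim from two dimensions, the only genuinely new ingredients being the third test monomial $(z-z_\ell)^2$ and the third case $\phi_3=\phi_*$. The more delicate bookkeeping is the feasibility verification: one must confirm that the four internal nodes simultaneously correct all three diagonal quadratics while leaving the constant and all odd/cross moments untouched, which is where the reflection-symmetry reduction to the four even monomials $\{1,(x-x_i)^2,(y-y_j)^2,(z-z_\ell)^2\}$ does the essential work and keeps the computation tractable.
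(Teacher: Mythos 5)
Your proposal is correct and follows exactly the route the paper intends: the paper omits this proof as ``similar to that of Theorem~\ref{thm:OPT2DP2},'' and your argument is precisely that 3D analogue --- feasibility via positivity, the weight sum $\mu_1+\mu_2+\mu_3+2\omega=1$, and reflection symmetry reducing exactness to the even monomials, then optimality by contradiction using the three diagonal quadratics $(x-x_i)^2$, $(y-y_j)^2$, $(z-z_\ell)^2$ and a three-way case split on which $\phi_i$ attains $\phi_*$. Your moment inequalities and the bound $1<\min\{\cdots\}\cdot 2\psi\le 3(\omega_1^-+\omega_1^+)+(\omega_2^-+\omega_2^+)+(\omega_3^-+\omega_3^+)$ (and its permutations) match the structure of \eqref{eq:541}--\eqref{eq:553} exactly.
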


The proof of Theorem \ref{thm:OPT3DP2} is similar to that of Theorem \ref{thm:OPT2DP2} and is thus omitted. As 
summarized in Table \ref{tab:530}, 
the advantages of the optimal 3D convex decomposition (\ref{eq:QR3D}), over the 3D versions of the classic decompositions, are even greater than the 2D case. 
Figure \ref{fig:627} shows 
the boundary and internal nodes for further comparison. 


\begin{table}[h]
\centering
\caption{BP CFL conditions and the internal node sets of the optimal 3D convex decomposition (\ref{eq:QR3D}) and the 3D versions of the Zhang--Shu and Jiang--Liu convex decompositions for the $\mathbb{P}^2$ and $\mathbb{P}^3$ spaces.}\label{tab:530}
\renewcommand\arraystretch{1.7}
\begin{tabular}{ lrrrr} 
 \toprule[1.5pt]
  & BP CFL condition & BP CFL condition & Internal  & Internal  \\
 & general case & $\frac{\dx}{a_1} = \frac{\dy}{a_2} = \frac{\dz}{a_3} = h$ &  $\mathbb{P}^2$ nodes & $\mathbb{P}^3$ nodes  \\ 
 
 \midrule[1.5pt]
 
Optimal & 
$\left[2\frac{a_1}{\dx}+2\frac{a_2}{\dy}+2\frac{a_3}{\dz}+4\max\left\{ 
\frac{a_1}{\dx},
\frac{a_2}{\dy},
\frac{a_3}{\dz}
\right\} \right] \Delta t \le c_0$ 
& $\Delta t \le \frac{c_0}{10} h$ & 1 $\sim$ 4 & 1 $\sim$ 4 \\ 
 Zhang \& Shu \cite{zhang2010} & $\left[6\frac{a_1}{\dx}+6\frac{a_2}{\dy}+6\frac{a_3}{\dz}\right]\Delta t \le c_0$ & $\Delta t \le \frac{c_0}{18} h$ & 19 & 48 \\ 
 Jiang \& Liu \cite{jiang2018invariant} & $\left[18 \max\left\{ 
\frac{a_1}{\dx},
\frac{a_2}{\dy},
\frac{a_3}{\dz}
\right\} \right]\Delta t \le c_0$ & $\Delta t \le \frac{c_0}{18} h$ & 19 & 48\\ 

\bottomrule[1.5pt]
\end{tabular}
\end{table}

\begin{figure}[htb]
	\centering
	\begin{subfigure}{0.24\textwidth}
		\includegraphics[width=\textwidth]{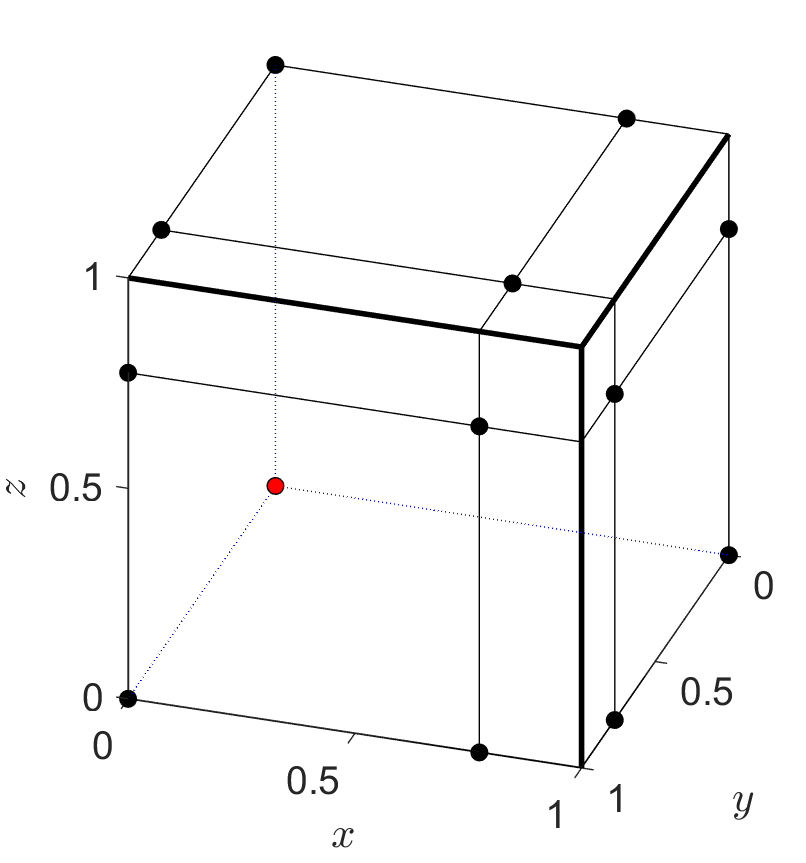}
		\caption{Boundary nodes (black) and optimal internal nodes $\mathbb{S}_{ij\ell}^{\tt optimal}$ (red) for $\mathbb{P}^2$.}
	\end{subfigure}
	\hfill
	\begin{subfigure}{0.24\textwidth}
		\includegraphics[width=\textwidth]{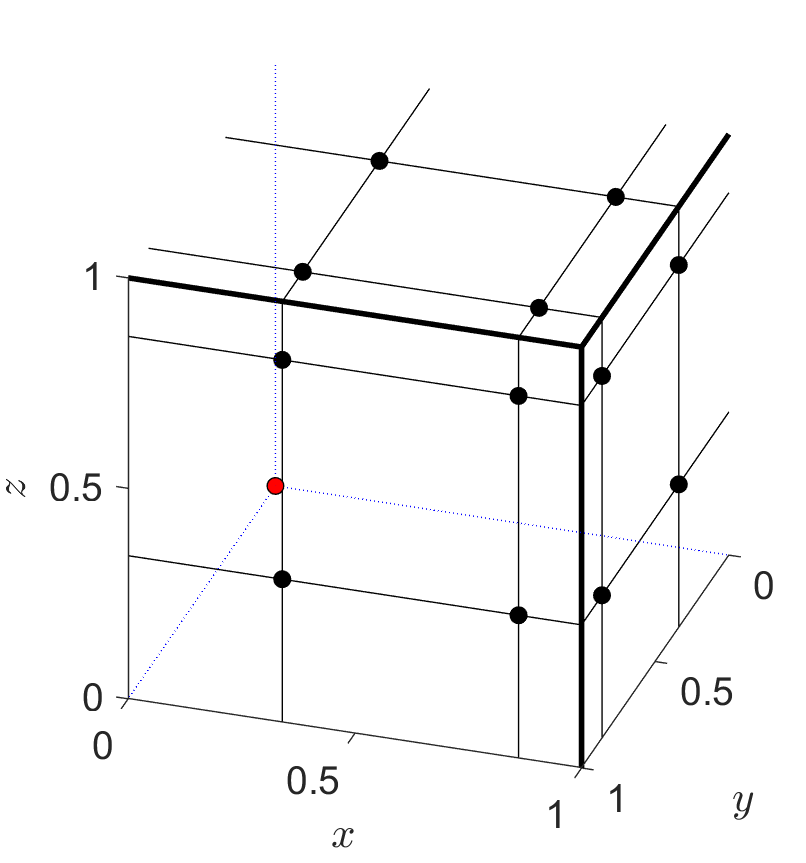}
		\caption{Boundary nodes (black) and optimal internal nodes $\mathbb{S}_{ij\ell}^{\tt optimal}$ (red) for $\mathbb{P}^3$.}
	\end{subfigure}
	\hfill
	\begin{subfigure}{0.24\textwidth}
		\includegraphics[width=\textwidth]{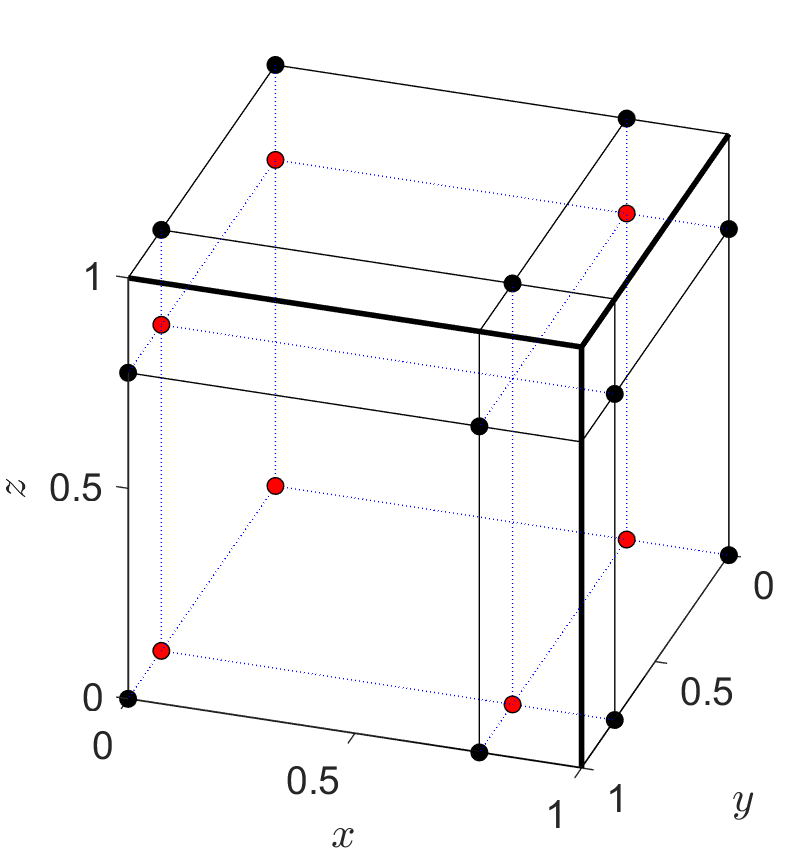}
		\caption{Boundary nodes (black) and classic internal nodes $\mathbb{S}_{ij\ell}^{\tt Zhang-Shu}$ (red) for $\mathbb{P}^2$.}
	\end{subfigure}
	\hfill
	\begin{subfigure}{0.24\textwidth}
		\includegraphics[width=\textwidth]{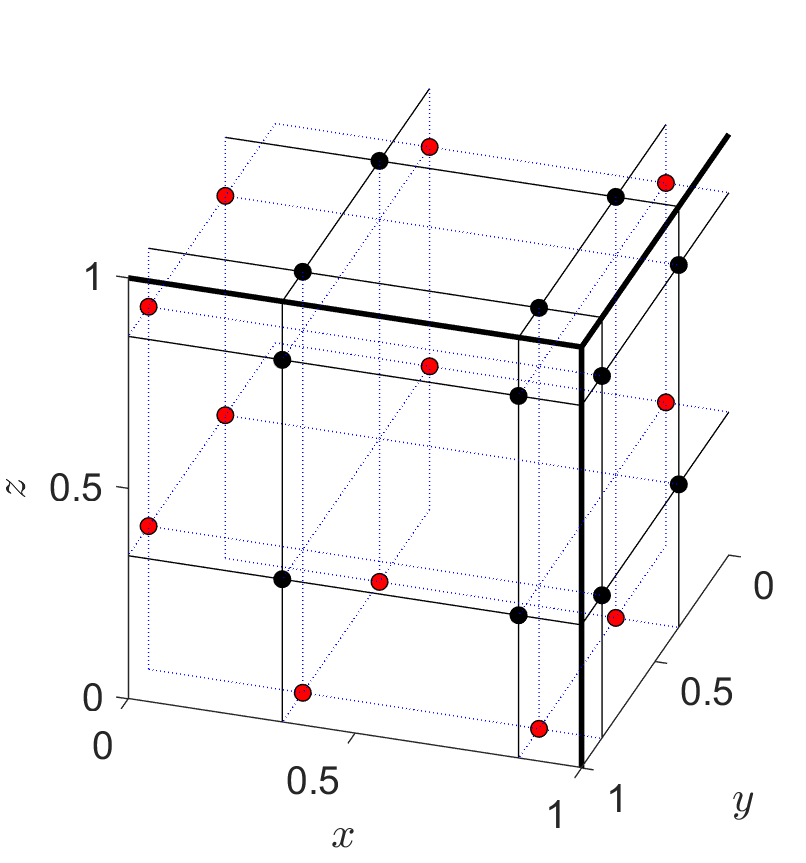}
		\caption{Boundary nodes (black) and classic internal nodes $\mathbb{S}_{ij\ell}^{\tt Zhang-Shu}$ (red) for $\mathbb{P}^3$.}
	\end{subfigure}
	\caption{Nodes of the optimal 3D decomposition (\ref{eq:QR3D}) and the 3D version of the classic Zhang--Shu decomposition on $\Omega_{ij\ell} = [-1,1]^3$, for $\mathbb{P}^2$ and $\mathbb{P}^3$, in the case of $\frac{\dx}{a_1} = \frac{\dy}{a_2} = \frac{\dz}{a_3}$. 
		We only plot the nodes in the first octant $[0,1]^3$, while the other nodes are distributed symmetrically.} 
	\label{fig:627}
\end{figure}

\begin{figure}[h]
	\centering
	\centerline{
		\hfill
		\includegraphics[width=0.32\textwidth]{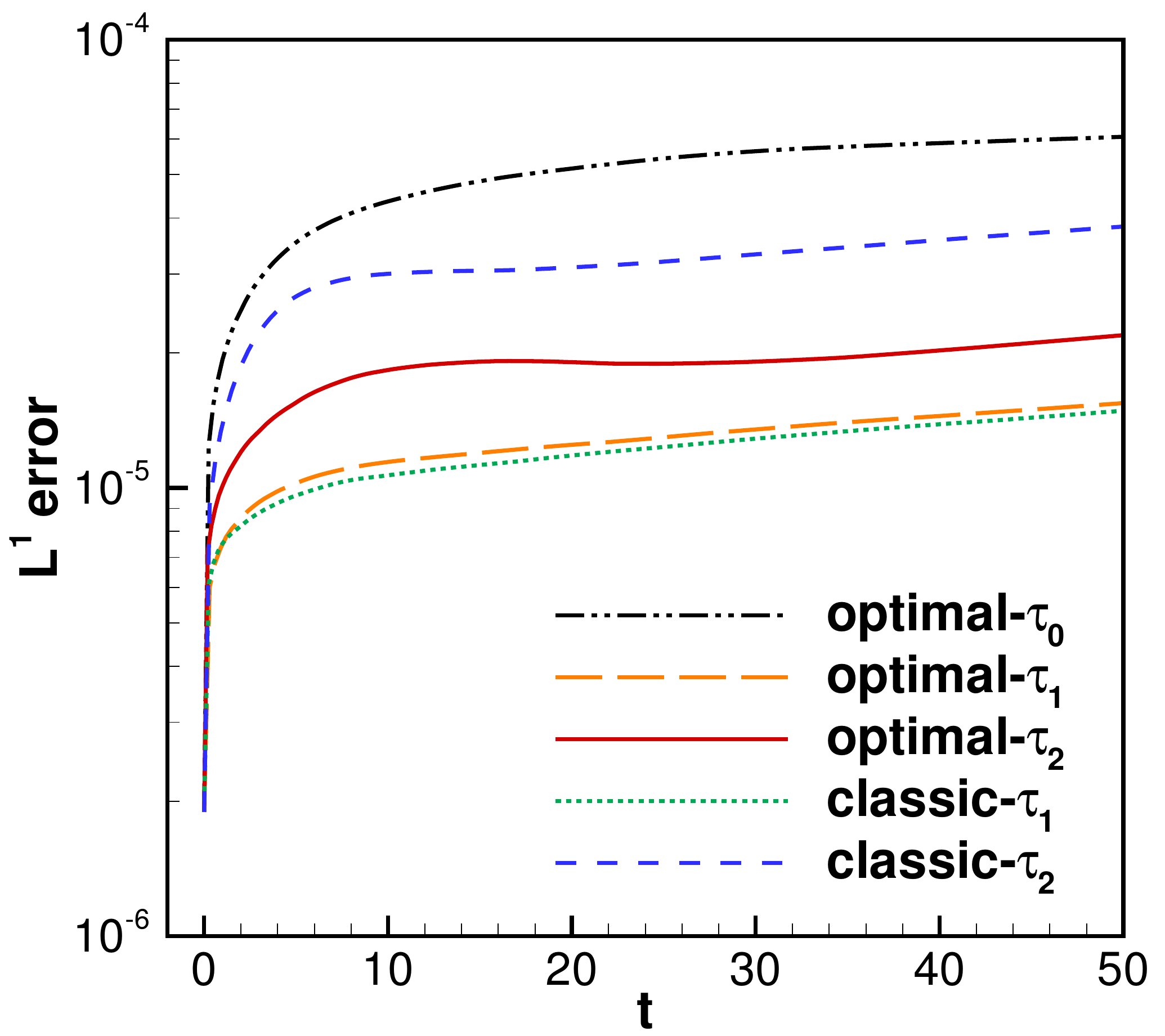}
		\hfill
		\includegraphics[width=0.32\textwidth]{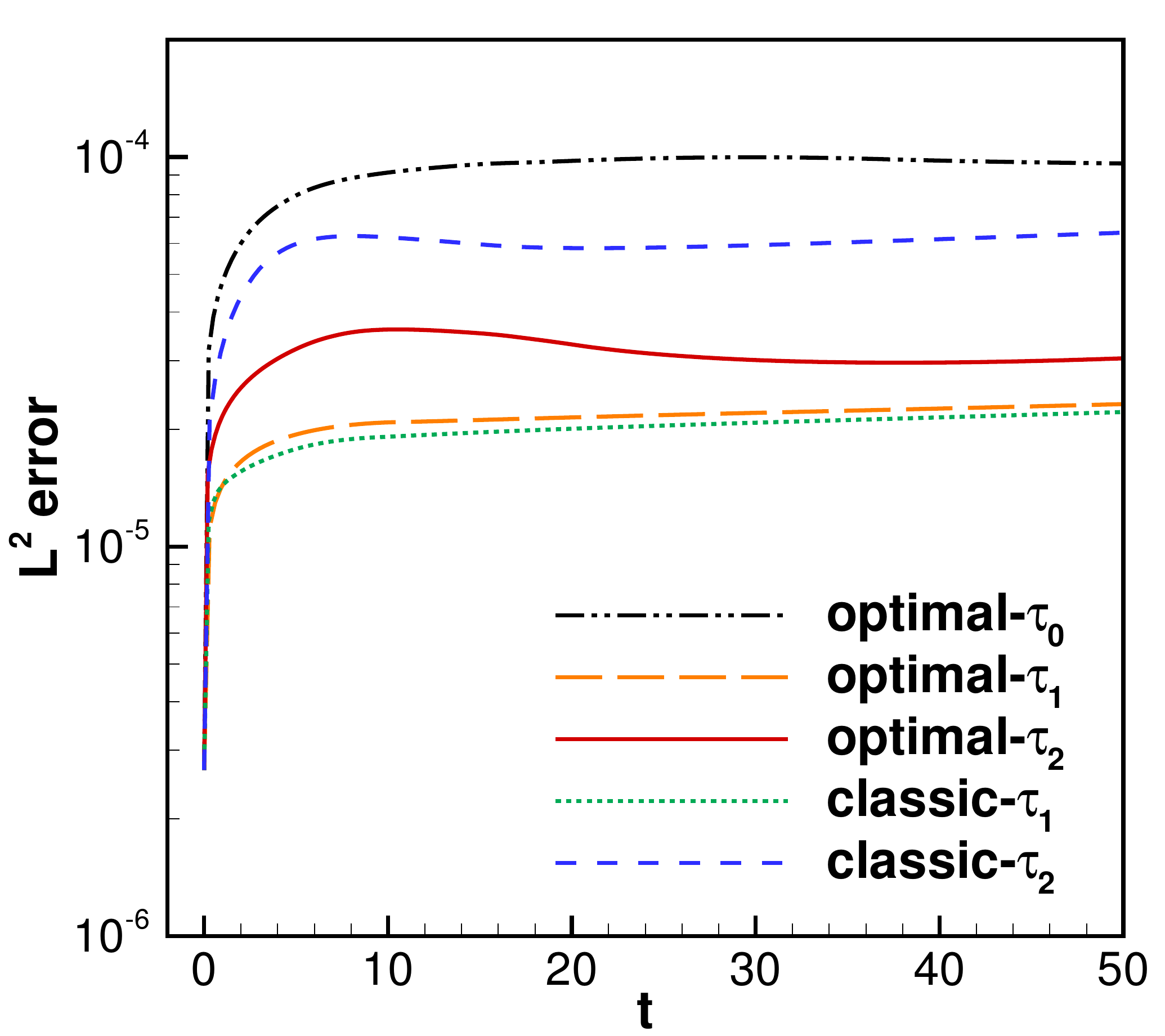}
		\hfill
		\includegraphics[width=0.32\textwidth]{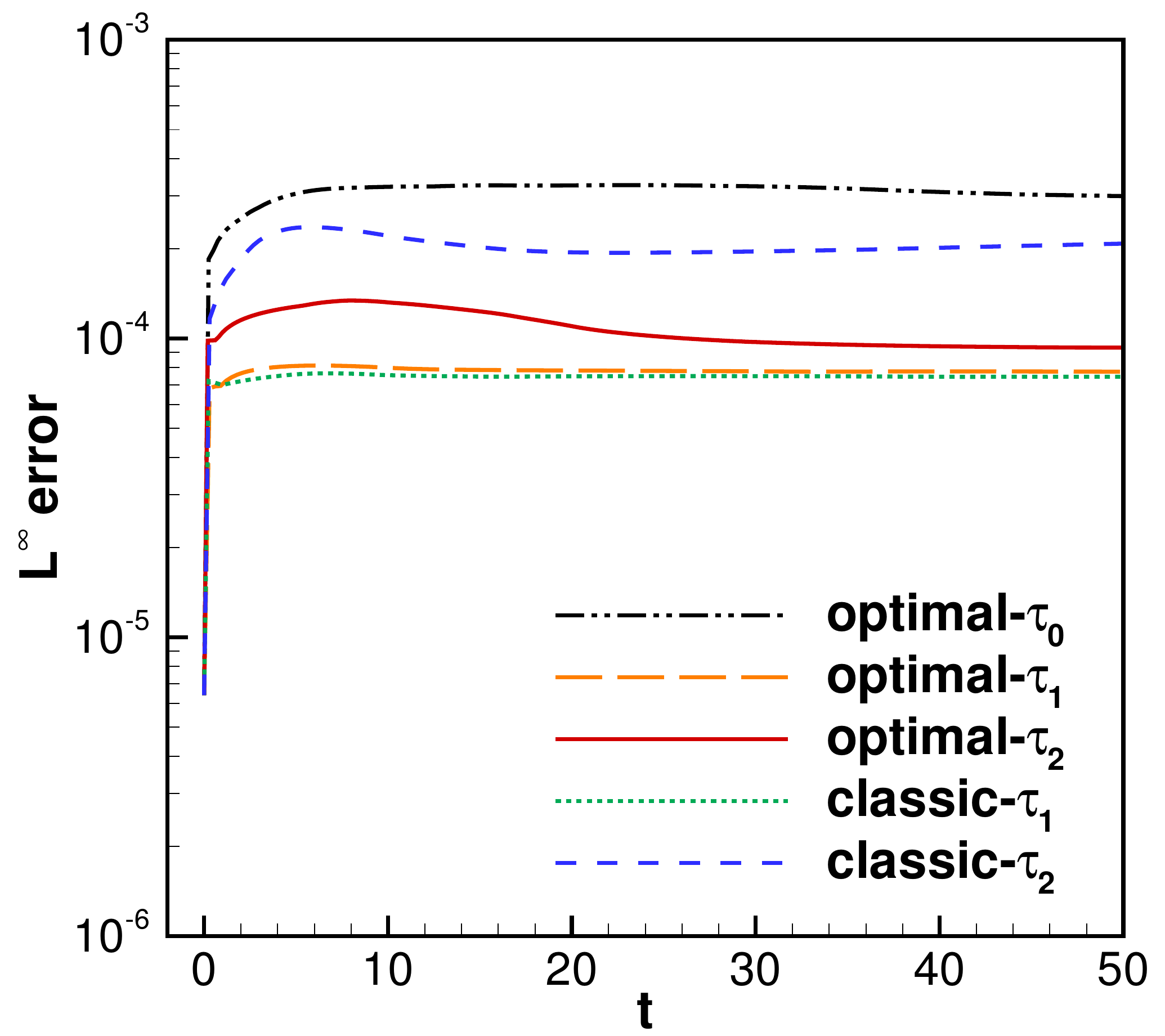}
		\hfill
	}
	%
	\centerline{
		\hfill
		\includegraphics[width=0.32\textwidth]{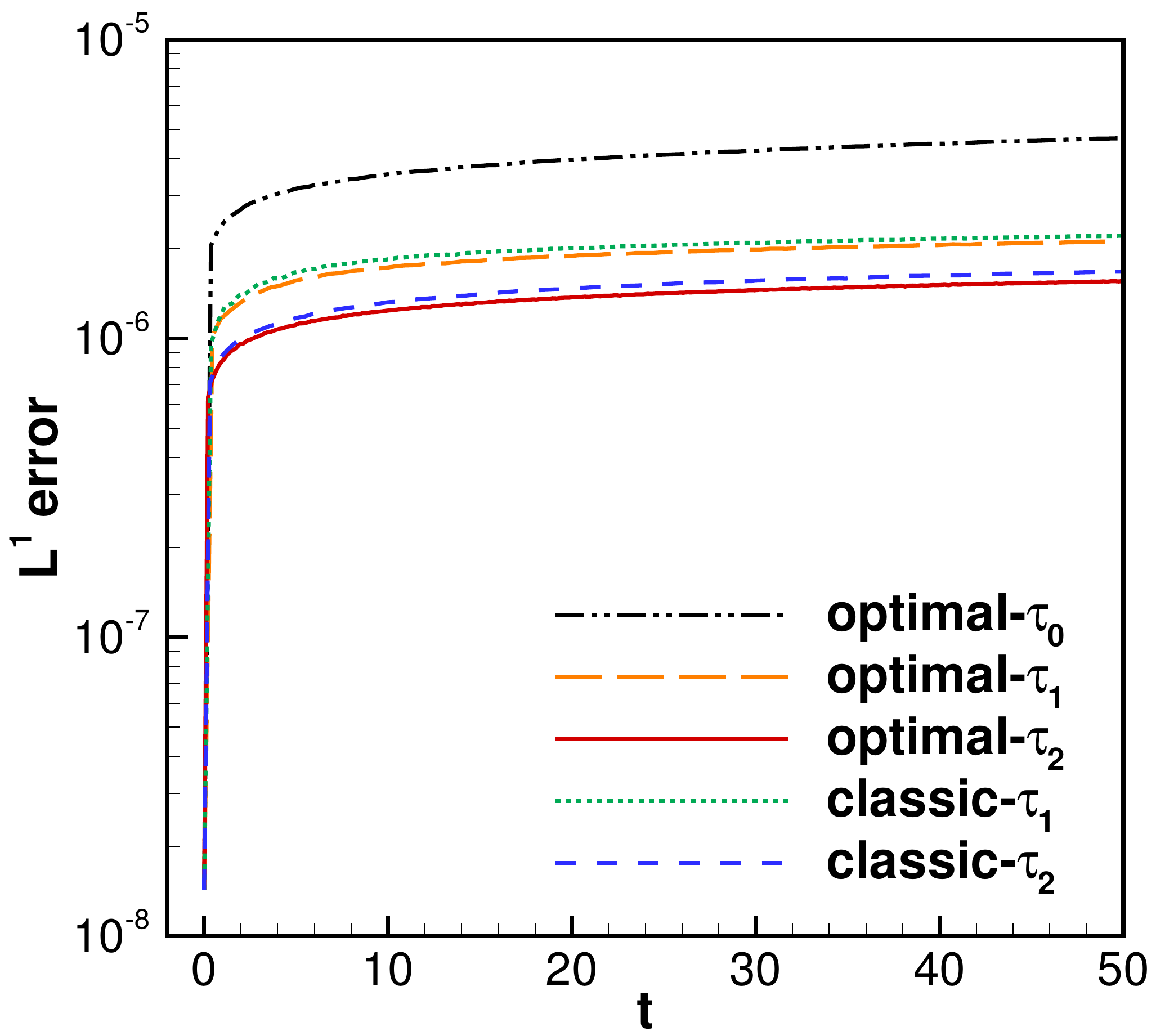}
		\hfill
		\includegraphics[width=0.32\textwidth]{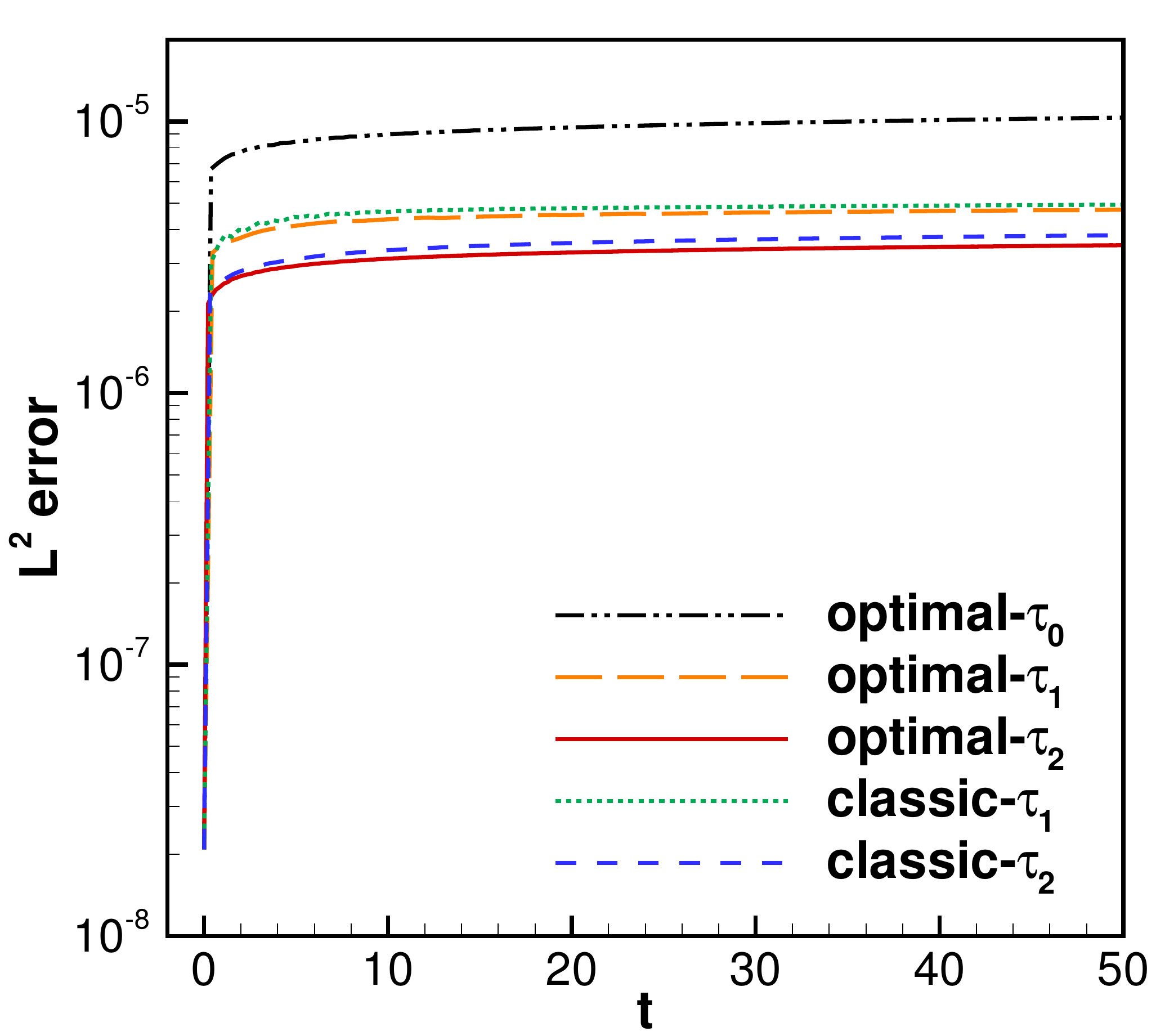}
		\hfill
		\includegraphics[width=0.32\textwidth]{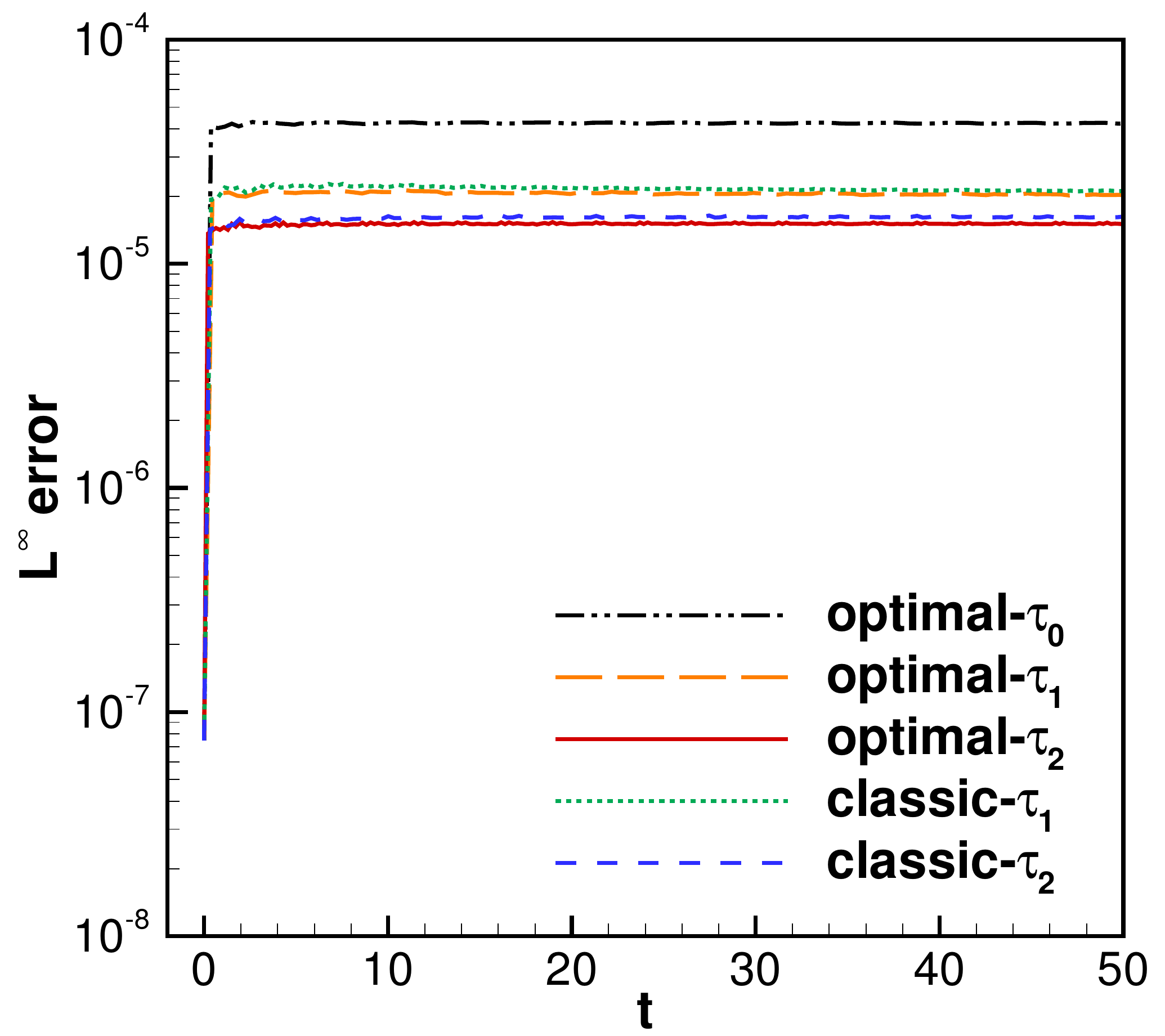}
		\hfill
	}
	\caption{Example \ref{Ex1}: Time evolution of the $L^1$, $L^2$, and $L^{\infty}$ errors in the numerical solutions obtained using the third-order ($\mathbb{P}^2$; top row) and fourth-order ($\mathbb{P}^3$; bottom row) BP DG schemes designed via different approaches and by using different time step-sizes. 
	}
	\label{fig:ex1_2}
\end{figure}

\section{Numerical experiments}

In this section, we test the accuracy, robustness, and efficiency of the high-order BP DG schemes designed via the proposed optimal convex decomposition \eqref{eq:QR2D}, which are referred to as the ``{\tt optimal} approach'' for short. For comparison, we also consider the high-order BP DG schemes designed via the classic Zhang--Shu convex decomposition \eqref{eq:411}, which are referred to as the ``{\tt classic} approach'' for short. 
While the convex decomposition is independent of the choice of BP numerical fluxes, we adopt 
the global Lax--Friedrichs flux with $c_0=1$ in all the presented tests. 
Unless otherwise stated, we employ the three-stage third-order SSP RK discretization  \cite{GottliebKetchesonShu2011} (abbreviated as {\tt SSPRK3}) for the $\mathbb P^2$-based (third-order) DG scheme, 
and  the five-stage fourth-order SSP RK time discretization \cite{GottliebKetchesonShu2011} (abbreviated as {\tt SSPRK4})
 for the $\mathbb P^3$-based (fourth-order) DG scheme. 
The SSP coefficients for {\tt SSPRK3} and {\tt SSPRK4} are $C_{\tt SSP}=1$ and $C_{\tt SSP} \approx 1.508$, respectively. 
All  the  methods  are  implemented using C++ language with double precision on a Linux server with 
Intel(R) Xeon(R) Platinum 8268 CPU @ 2.90GHz 2TB RAM.



\begin{example}[Linear convection equation]\label{Ex1}
	We start with the two-dimensional linear convection equation 
	\begin{equation}\label{eq:791}
		u_t+u_x + u_y=0, \qquad (x,y,t)\in[-1,1]\times[-1,1]\times\mathbb{R}^+,
	\end{equation}
	with periodic boundary conditions and the initial data $u(x,y,0) = \sin(\pi(x+y))$. 
	The exact solution satisfies a maximum principle, implying the convex invariant region $G=[-1,1]$. 
	We simulate this problem until $t=50$ to study the long-time stability of the BP DG schemes with the BP limiter on the  uniform mesh of $100 \times 100$ cells.     
	Figure \ref{fig:ex1_2} shows the time evolution of the numerical errors in the 
	$L^1$, $L^2$, and $L^{\infty}$ norms, respectively. In the simulations, we adopt three different time step-sizes, 
	namely, $\tau_0 :=  C_{\tt SSP}  \tau^{\tt BP}_{\tt O}$ with $\tau^{\tt BP}_{\tt O}$ being 
	the maximum $\Delta t$ determined by the optimal BP CFL condition \eqref{eq:CFL2D}, 
	$\tau_1 :=  C_{\tt SSP}  \tau^{\tt BP}_{\tt C}$ with $\tau^{\tt BP}_{\tt C}$ being 
	the maximum $\Delta t$ determined by the classic Zhang--Shu BP CFL condition \eqref{eq:CFL-ZS}, 
	and $\tau_2 :=  C_{\tt SSP}  \tau^{\tt LS}$ with $\tau^{\tt LS}$ being the 
	maximum $\Delta t$ determined by the standard CFL condition \eqref{eq:LS} for linear stability. 
	The CPU time of all these simulations is presented in Table \ref{tab:CPUEx1}. 
	One can see that all the errors shown in Figure \ref{fig:ex1_2} do not exhibit any sign of exponential growth, which indicates the simulations are stable. 
	We also observe that, when the identical time step is used,  
	the errors of the {\tt optimal} approach is smaller or comparable to those of the {\tt classic} one, 
	but the {\tt optimal} approach uses less CPU time (see Table \ref{tab:CPUEx1}) due to the fewer internal nodes. 
	It is seen that the numerical errors are slightly larger, if a bigger time step is adopted, as expected. 
	The {\tt optimal} approach allows a larger time step, with which the CPU time is much less, as shown in Table \ref{tab:CPUEx1}.

  	\begin{table}[htbp] 
	\centering
	\caption{CPU time in seconds for Example \ref{Ex1}.}
		\label{tab:CPUEx1}
	\renewcommand\arraystretch{1.5}
	\setlength{\tabcolsep}{5mm}{
		\begin{tabular}{cccccc}
			\toprule[1.5pt]
			
			\multirow{2}{*}{} & 
			\multicolumn{3}{c}{{\tt optimal} approach} & 
			\multicolumn{2}{c}{{\tt classic} approach} \\
			\cmidrule(r){2-4} \cmidrule(l){5-6} 
			&
			$\tau_0$ & $\tau_1$ & $\tau_2$ &
			$\tau_1$ & $\tau_2$  \\
			
			\midrule[1.5pt]
			
			$\mathbb{P}^2$ & 1563.22 & 2410.99 & 1980.90 & 2603.67 & 2166.29 \\
			
			$\mathbb{P}^3$ & 3058.73 & 4564.84 & 5330.88 & 4947.13 & 5775.08 \\
			
			\bottomrule[1.5pt]
		\end{tabular}
	}
\end{table}

\end{example}

\begin{example}[Riemann problem of Burgers' equation]\label{Ex3}
This example \cite{lu2021oscillation} considers	the inviscid Burgers' equation 
	\begin{align}
		u_t+\bigg(\frac{u^2}{2}\bigg)_x + \bigg(\frac{u^2}{2}\bigg)_y=0, \qquad (x,y,t)\in[0,1]\times[0,1]\times\mathbb{R}^+,
	\end{align}
	with outflow boundary conditions and the initial condition 
\begin{align*}
	u(x,y,0) = 
	\begin{cases} 
		-0.2, & \text{ if } x < 0.5, y> 0.5, \\
		-1, & \text{ if } x > 0.5, y> 0.5, \\
		0.5, & \text{ if } x < 0.5, y < 0.5, \\
		0.8, & \text{ if } x > 0.5, y < 0.5.  
	\end{cases}
\end{align*}
The exact solution  of this example also obeys the maximum principle with $G=[-1,0.8]$.  
Figure \ref{fig:ex3_1} displays the numerical results at $t=0.5$ computed with $256\times 256$ uniform cells. 
We observe that the {\tt optimal} approach with time step-size $\tau_0=C_{\tt SSP}  \tau^{\tt BP}_{\tt O}$ and the {\tt classic} approach with time step-size $\tau_1=C_{\tt SSP}  \tau^{\tt BP}_{\tt C}$ give very similar results, and 
the discontinuities are equally well resolved by both approaches. 
However, the CPU time of the {\tt optimal} approach is much less than that of the 
{\tt classic} approach, as exhibited in Table \ref{tab:ex-cpu}.


\begin{figure}[h]
	\centering
	\begin{subfigure}{0.31\textwidth}
		\includegraphics[width=\textwidth]{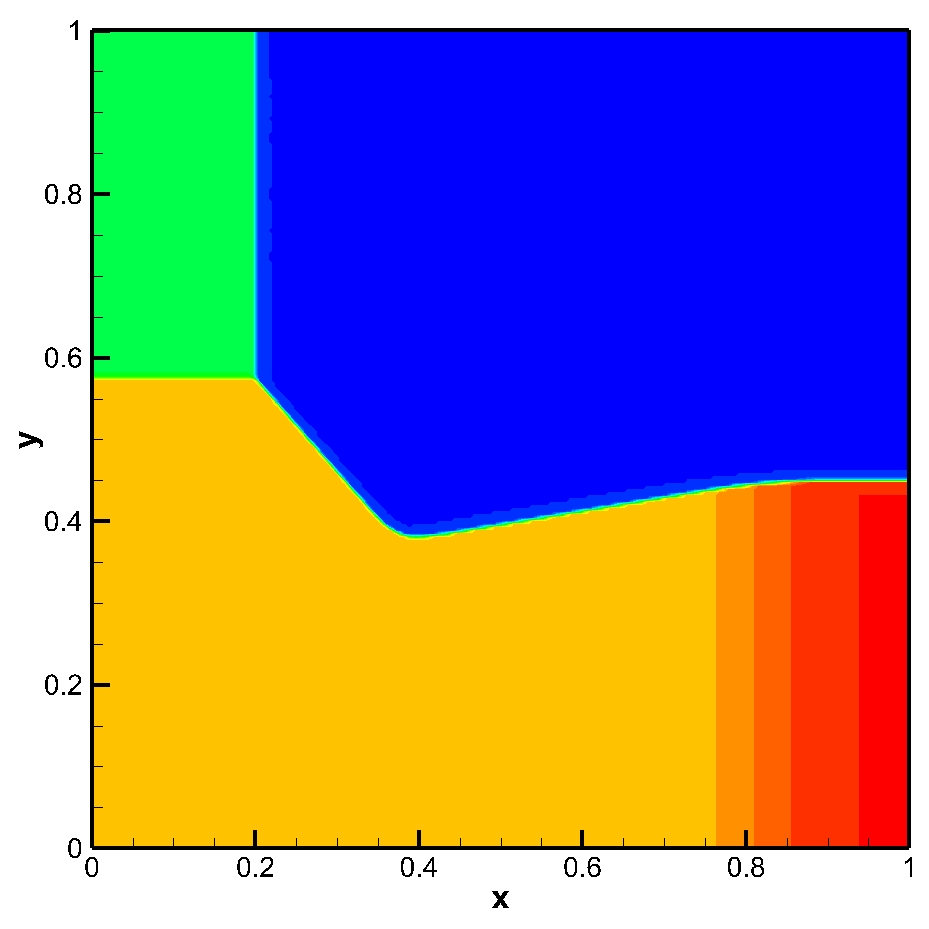}
		\caption{ $\mathbb{P}^2$: {\tt optimal} approach.}
	\end{subfigure}
	\hfill
	\begin{subfigure}{0.31\textwidth}
		\includegraphics[width=\textwidth]{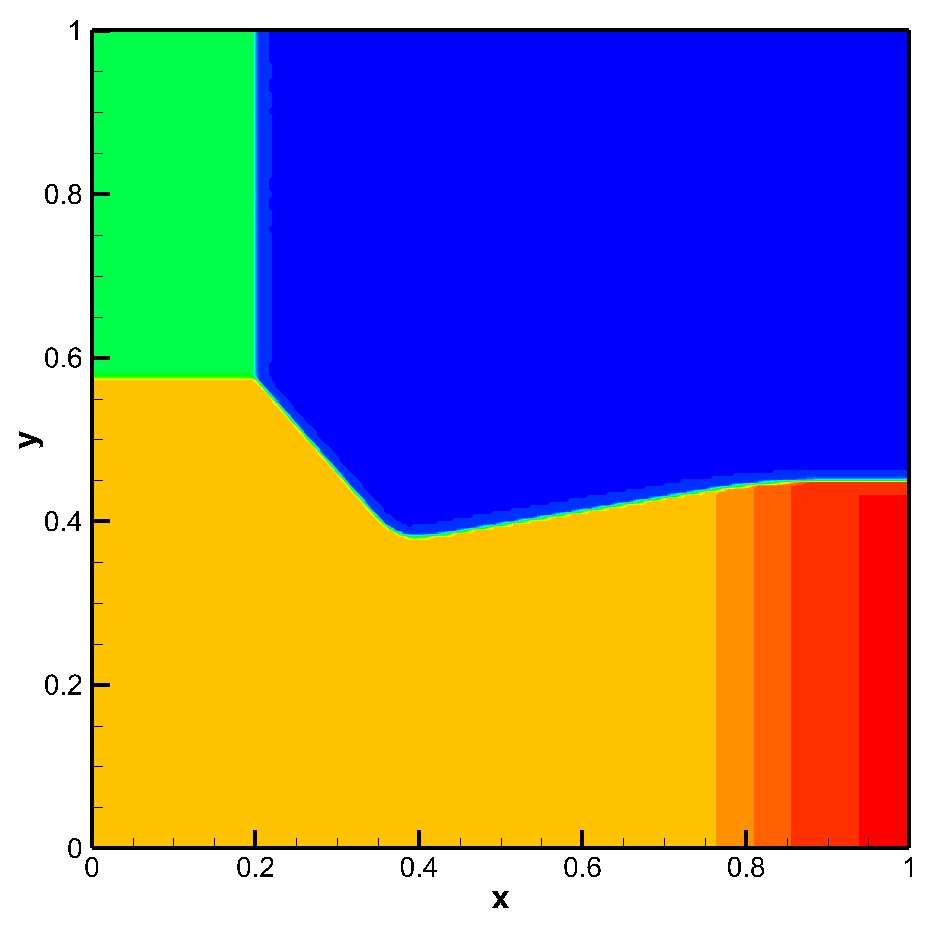}
		\caption{ $\mathbb{P}^2$: {\tt classic} approach.}
	\end{subfigure}
	\hfill
	\begin{subfigure}{0.35\textwidth}
		\includegraphics[width=\textwidth]{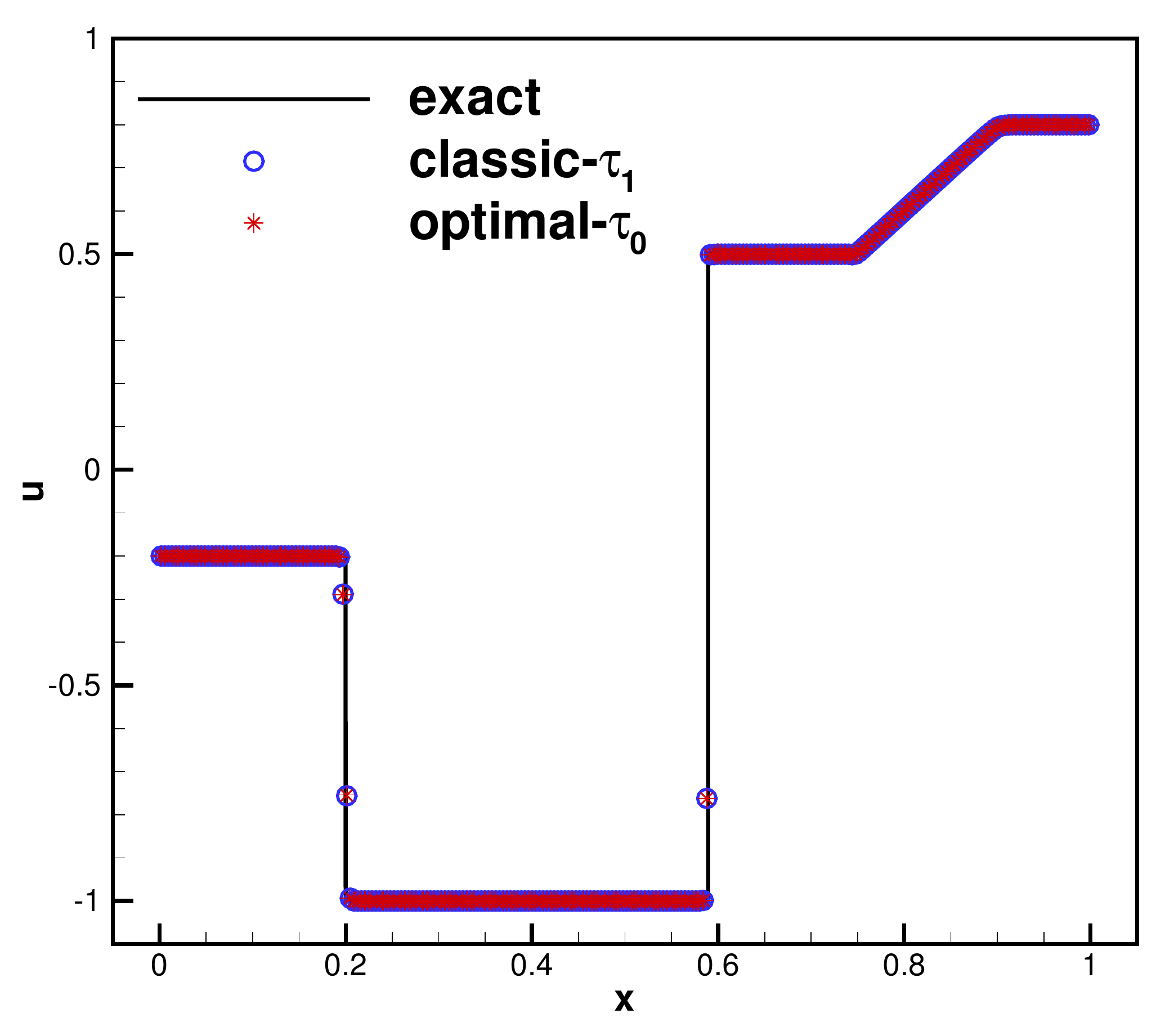}
		\caption{ $\mathbb{P}^2$: cut along the diagonal line  $x=1-y$.}
	\end{subfigure}

	\begin{subfigure}{0.31\textwidth}
		\includegraphics[width=\textwidth]{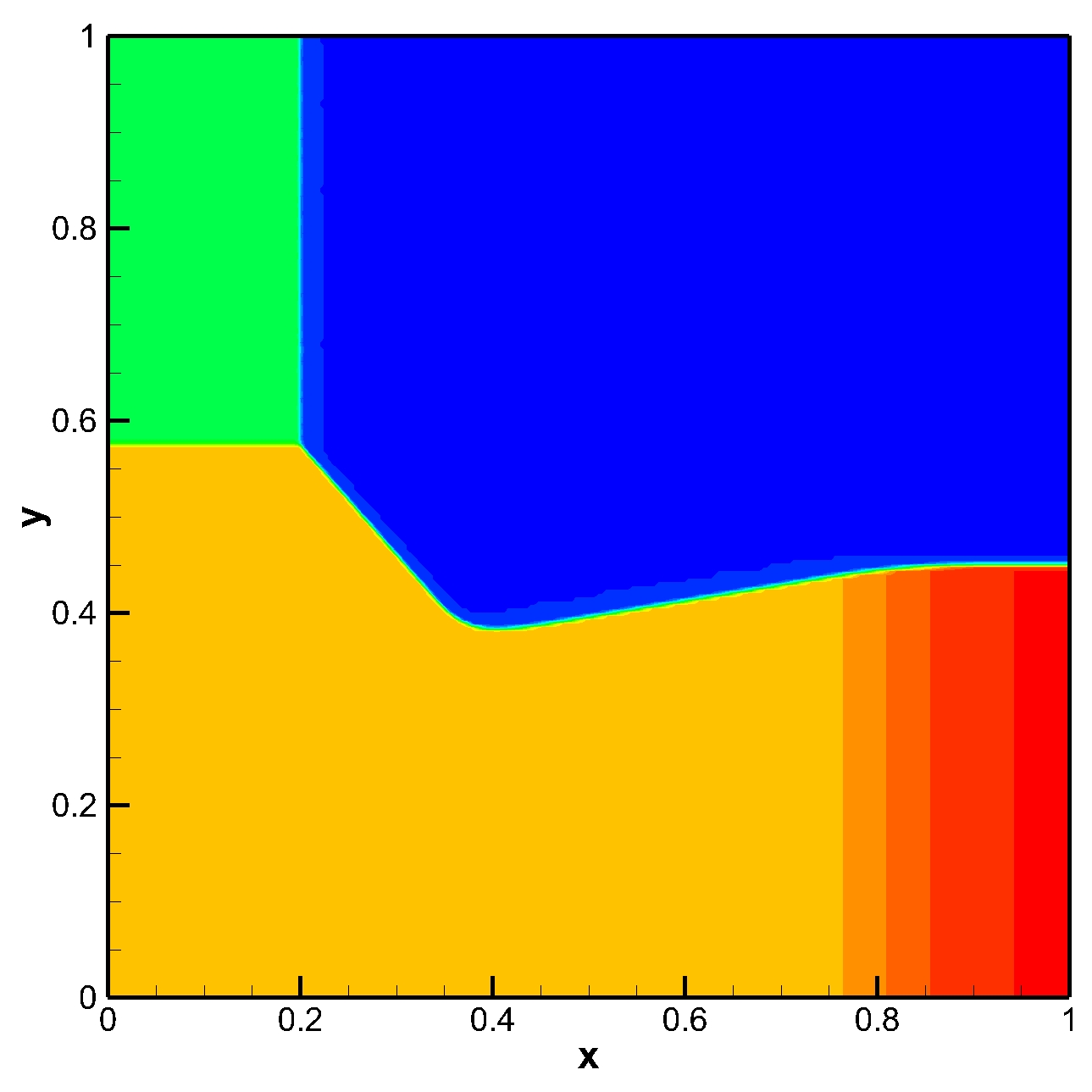}
		\caption{ $\mathbb{P}^3$: {\tt optimal} approach.}
	\end{subfigure}
	\hfill
	\begin{subfigure}{0.31\textwidth}
		\includegraphics[width=\textwidth]{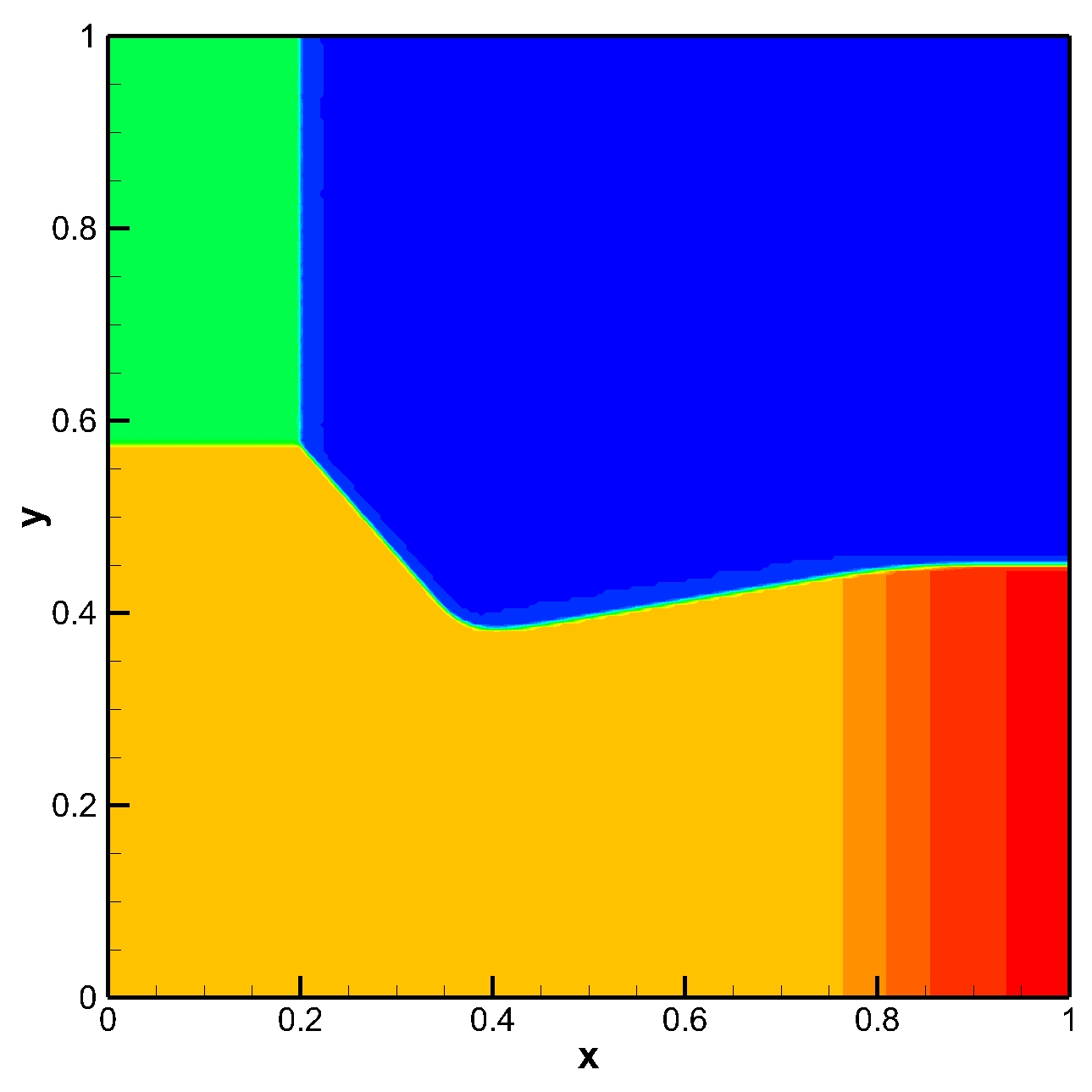}
		\caption{$\mathbb{P}^3$: {\tt classic} approach.}
	\end{subfigure}
	\hfill
	\begin{subfigure}{0.35\textwidth}
		\includegraphics[width=\textwidth]{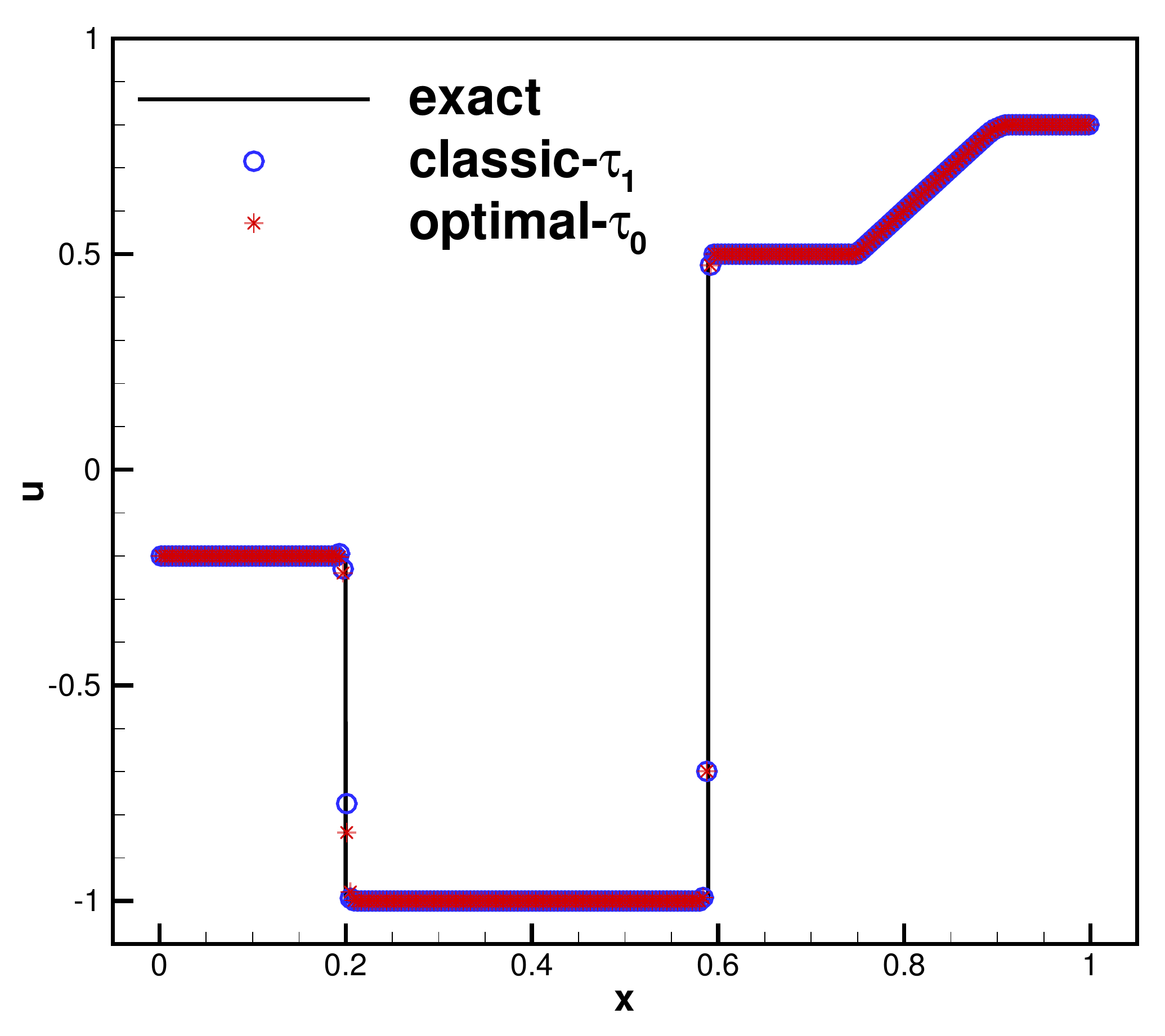}
		\caption{$\mathbb{P}^3$: cut along the diagonal line  $x=1-y$.}
	\end{subfigure}

	\caption{Example \ref{Ex3}: The numerical solutions at $t=0.5$ obtained via the {\tt optimal} approach and the {\tt classic} approach.  
	}
	\label{fig:ex3_1}
\end{figure} 

\end{example}

\begin{table}[htb] 
	\centering
	\caption{CPU time in seconds for Examples \ref{Ex3}--\ref{Ex5}.}
	\label{tab:ex-cpu}
	\setlength{\tabcolsep}{5.6mm}{
		\begin{tabular}{lccccc}
			\toprule[1.5pt]
			
			& \multirow{2}{*}{Approach} 
			
			& Example \ref{Ex3}  & Example \ref{Ex4}      & Example \ref{Ex5} \\
			& & 2D Riemann problem  &  Mach 80 jet & Mach 2000 jet \\
			
			\midrule[1.5pt]
			\multirow{2}{*}{ $\mathbb{P}^2$} 
			& optimal-$\tau_0$   & 309.39  & 13297.97 & 10708.38 \\
			& classic-$\tau_1$   & 504.95 & 20501.56 & 16527.88 \\
			
			\midrule[1.5pt]
			\multirow{2}{*}{ $\mathbb{P}^3$} 
			& optimal-$\tau_0$   & 664.71 & 30861.78 & 24304.09 \\
			& classic-$\tau_1$   & 1106.00& 42714.91 & 34896.86 \\
			
			\bottomrule[1.5pt]
		\end{tabular}
	}
\end{table}

\begin{example}[Mach 80 jet problem of Euler equations]\label{Ex4}
	This example simulates a Mach 80 jet \cite{ha2005numerical,zhang2010b,liu2022essentially} by solving the two-dimensional Euler equations, 
	which can be 
	written in the form of (\ref{2DCL}) with
	\begin{equation} \label{Eq:Euler}
			u=\begin{pmatrix}
				\rho \\
				\rho v_1 \\
				\rho v_2 \\
				E
			\end{pmatrix}, \qquad {f_1}(u)=\begin{pmatrix}
				\rho v_1 \\
				\rho {v_1}^{2}+p \\
				\rho v_1 v_2 \\
				(E+p) v_1
			\end{pmatrix}, \qquad {f_2}(u)=\begin{pmatrix}
				\rho v_2 \\
				\rho v_1 v_2 \\
				\rho {v_2}^{2}+p \\
				(E+p) v_2
		\end{pmatrix}
	\end{equation}
with $E=\frac{1}{2} \rho ( {v_1}^{2} + {v_2}^{2})+\rho e$ and $p=(\gamma-1) \rho e$.  
Here, $\rho$ is the density, $(v_1,v_2)$ denotes the velocity, $p$ is the pressure, $E$ is the total energy, and $e$ is the specific internal energy. The ratio of specific heats $\gamma$ is set to be $5/3$. 
The density and pressure should be positive, yielding the invariant region 
$
G=\{ u: \rho>0, p>0 \}, 
$ which is a convex set \cite{zhang2010b}.

Initially, 
the computation domain $[0,2]\times[-0.5,0.5]$ is  full of the ambient gas with $(\rho,v_1,v_2,p) = (5,0,0,0.4127)$. 
The jet with state $(\rho,v_1,v_2,p) = (5,30,0,0.4127)$ is injected into the domain from the left boundary between $y=-0.05$ and $0.05$. All the other boundaries are set as  outflow boundary conditions, as in \cite{ha2005numerical,liu2022essentially}. 
This is a benchmark yet challenging test, and a high-order numerical scheme without any BP techniques may easily produce negative density and/or negative pressure, which  
eventually causes the breakdown of the simulation code. 
We perform the simulation until $t=0.07$ on the uniform mesh of $480\times 240$ cells. 
The numerical results of density are presented in Figure \ref{fig:ex4_1}, from which we 
	clearly observe that the critical features of the jet: cocoons, bow shock, shear flows, etc. 
	All these features 
	are well captured by the BP DG methods and agree with those presented in \cite{ha2005numerical,liu2022essentially}. 
	The results of the {\tt optimal} 
	approach with time step $\tau_0=C_{\tt SSP}  \tau^{\tt BP}_{\tt O}$ 
	are comparable  to those of 
	 the {\tt classic} approach with time step $\tau_1=C_{\tt SSP}  \tau^{\tt BP}_{\tt C}$. 
	As shown in Table \ref{tab:ex-cpu}, the {\tt optimal} approach  allows a larger time step and 
	takes much less CPU time than the {\tt classic} approach. 
	
For both approaches, 
the local scaling BP limiter \cite{zhang2010b} is necessary to enforce the conditions (\ref{eq:478a}) and (\ref{eq:478b}). Due to the presence of strong shocks in this and next examples, the WENO limiter \cite{qiu2005runge} is also used, right before the BP limiter, within some adaptively detected troubled cells to suppress potential numerical oscillations.



\begin{figure}[h]
	\centering
	\begin{subfigure}[b]{0.49\textwidth}
		\begin{center}
			\includegraphics[width=\textwidth]{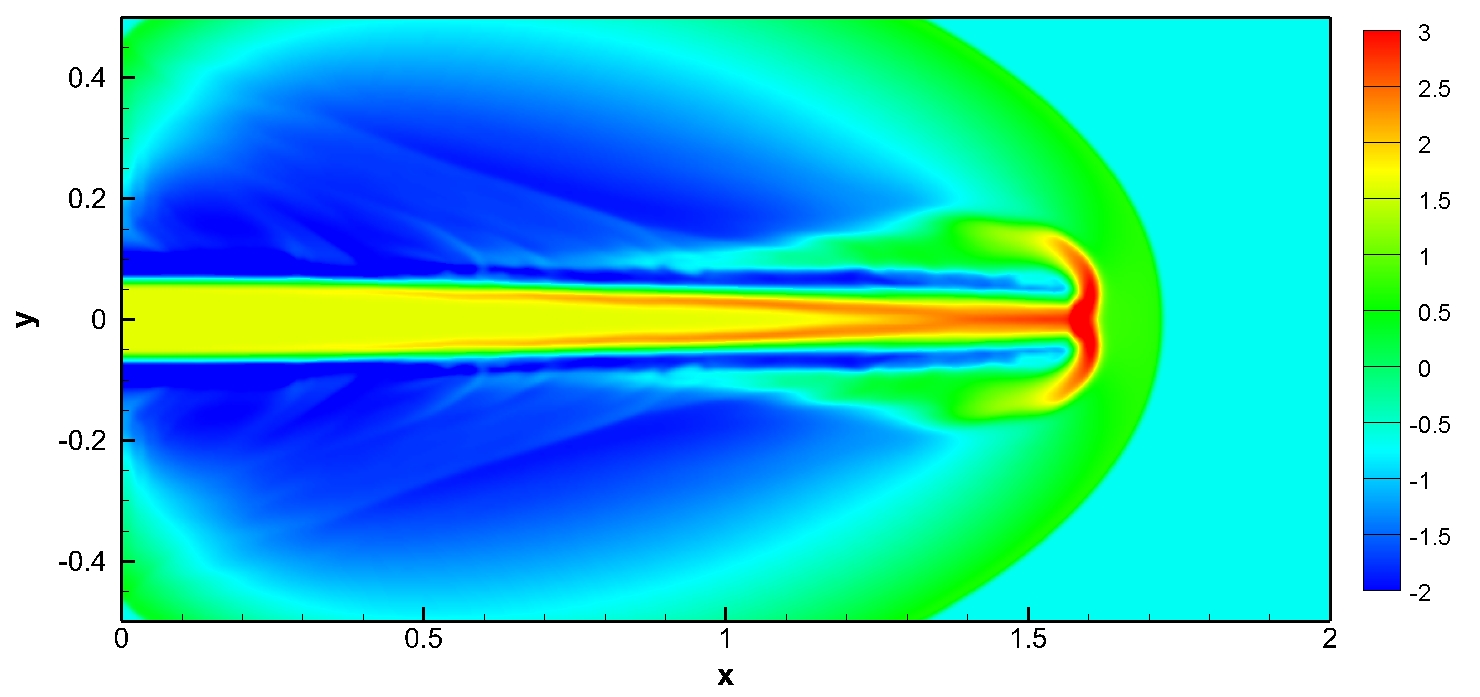}
			\caption{$\mathbb{P}^2$: {\tt optimal} approach.}
		\end{center}
	\end{subfigure}
	\hfill
	\begin{subfigure}[b]{0.49\textwidth}
		\begin{center}
			\includegraphics[width=\textwidth]{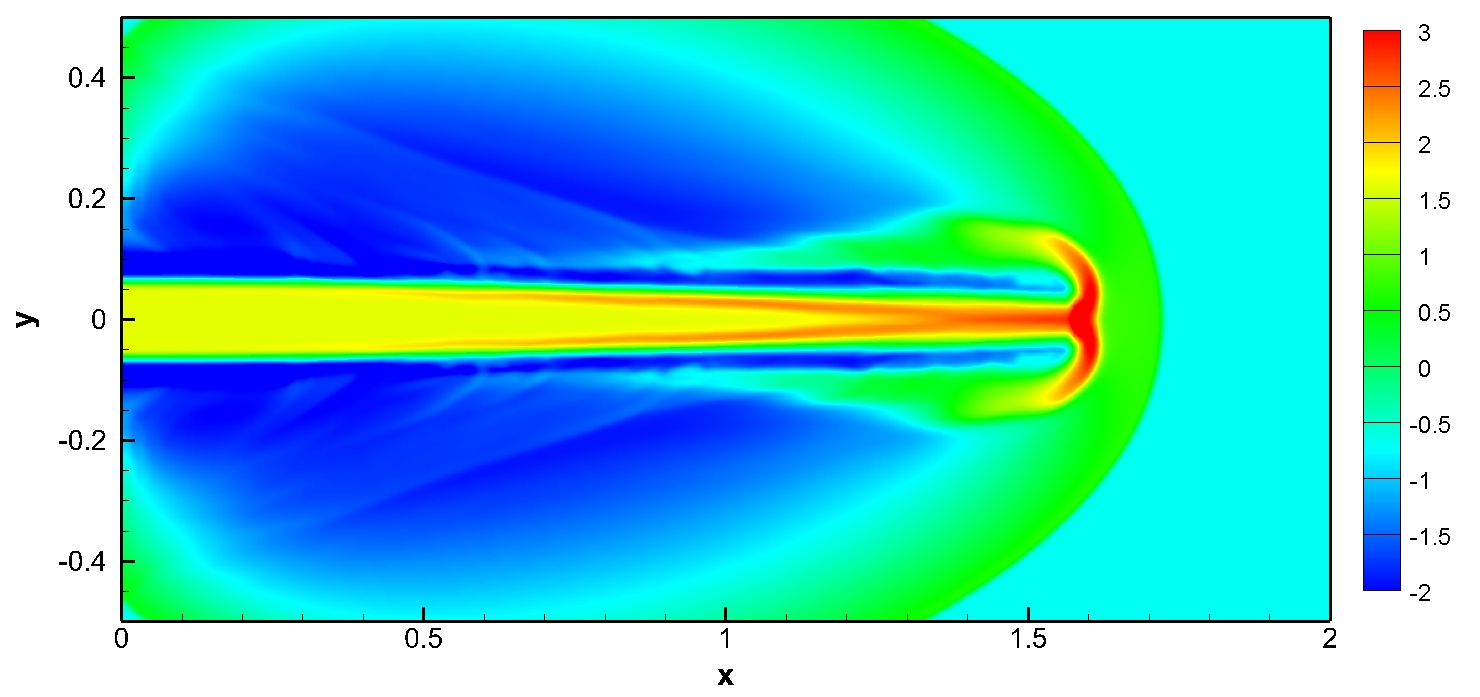}
			\caption{$\mathbb{P}^2$: {\tt classic} approach.}
		\end{center}
	\end{subfigure}
	
	\begin{subfigure}[b]{0.49\textwidth}
		\begin{center}
			\includegraphics[width=\textwidth]{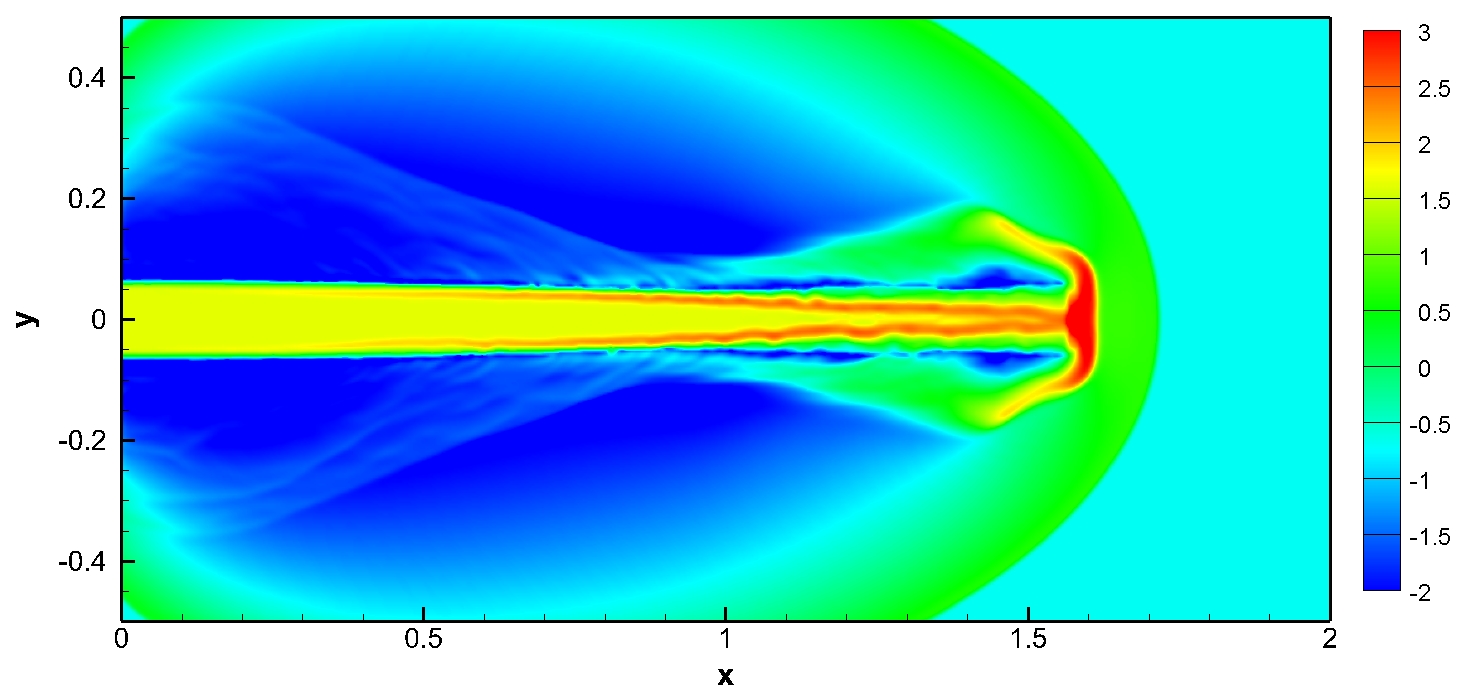}
			\caption{$\mathbb{P}^3$: {\tt optimal} approach.}
		\end{center}
	\end{subfigure}
	\hfill
	\begin{subfigure}[b]{0.49\textwidth}
		\begin{center}
			\includegraphics[width=\textwidth]{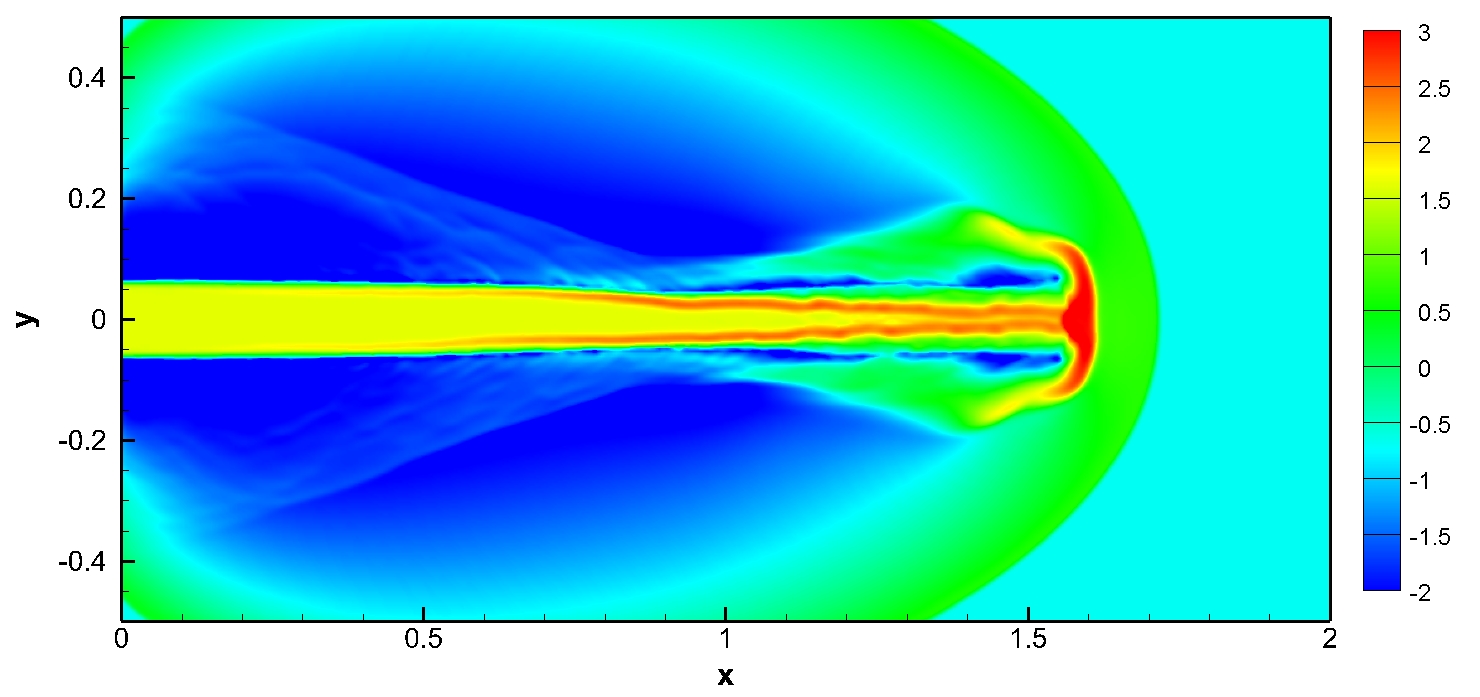}
			\caption{$\mathbb{P}^3$: {\tt classic} approach.}
		\end{center}
	\end{subfigure}
	
	\caption{Example \ref{Ex4}: The numerical density at $t=0.07$ obtained by using the third-order (top row) 
		and the fourth-order (bottom row) BP DG schemes designed via the {\tt optimal} approach and the {\tt classic} approach.}
	\label{fig:ex4_1}
\end{figure}

\begin{figure}[h]
	\centering
	\begin{subfigure}[b]{0.49\textwidth}
		\begin{center}
			\includegraphics[width=\textwidth]{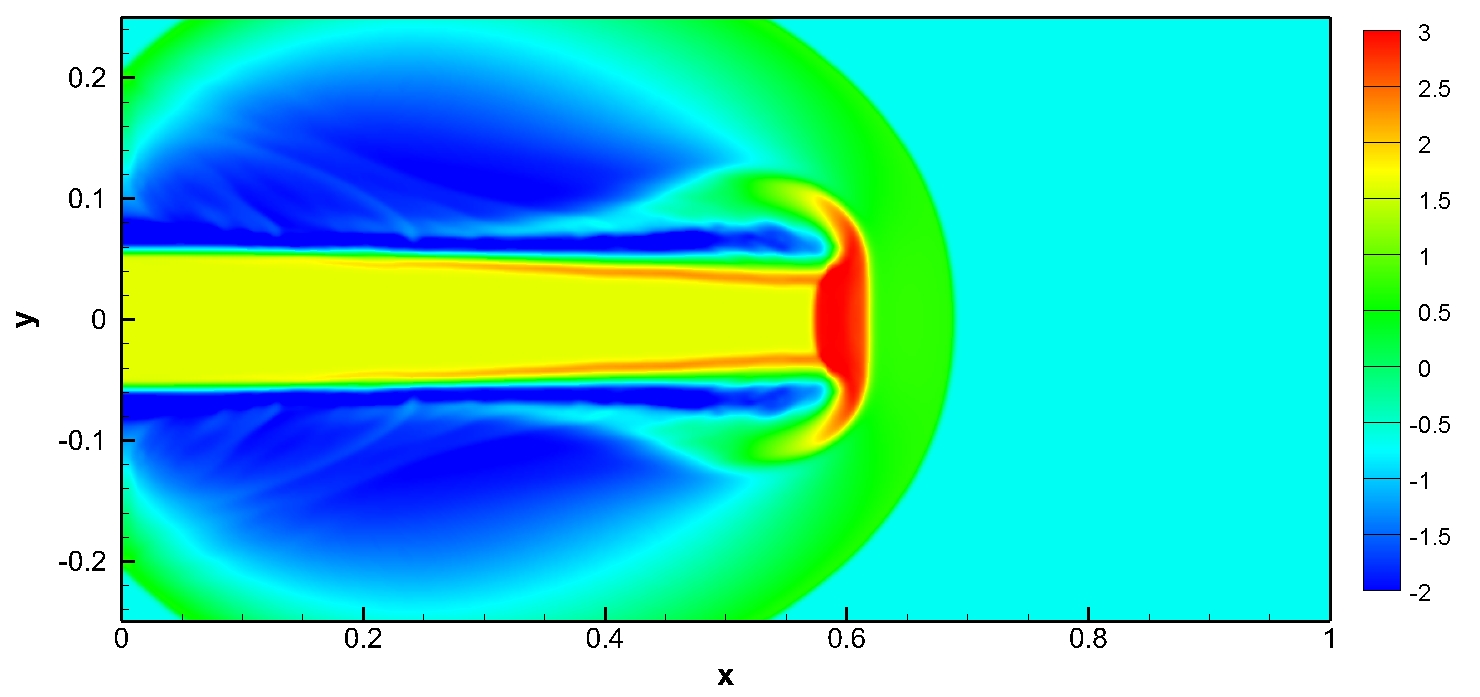}
			\caption{$\mathbb{P}^2$: {\tt optimal} approach.}
		\end{center}
	\end{subfigure}
	\hfill
	\begin{subfigure}[b]{0.49\textwidth}
		\begin{center}
			\includegraphics[width=\textwidth]{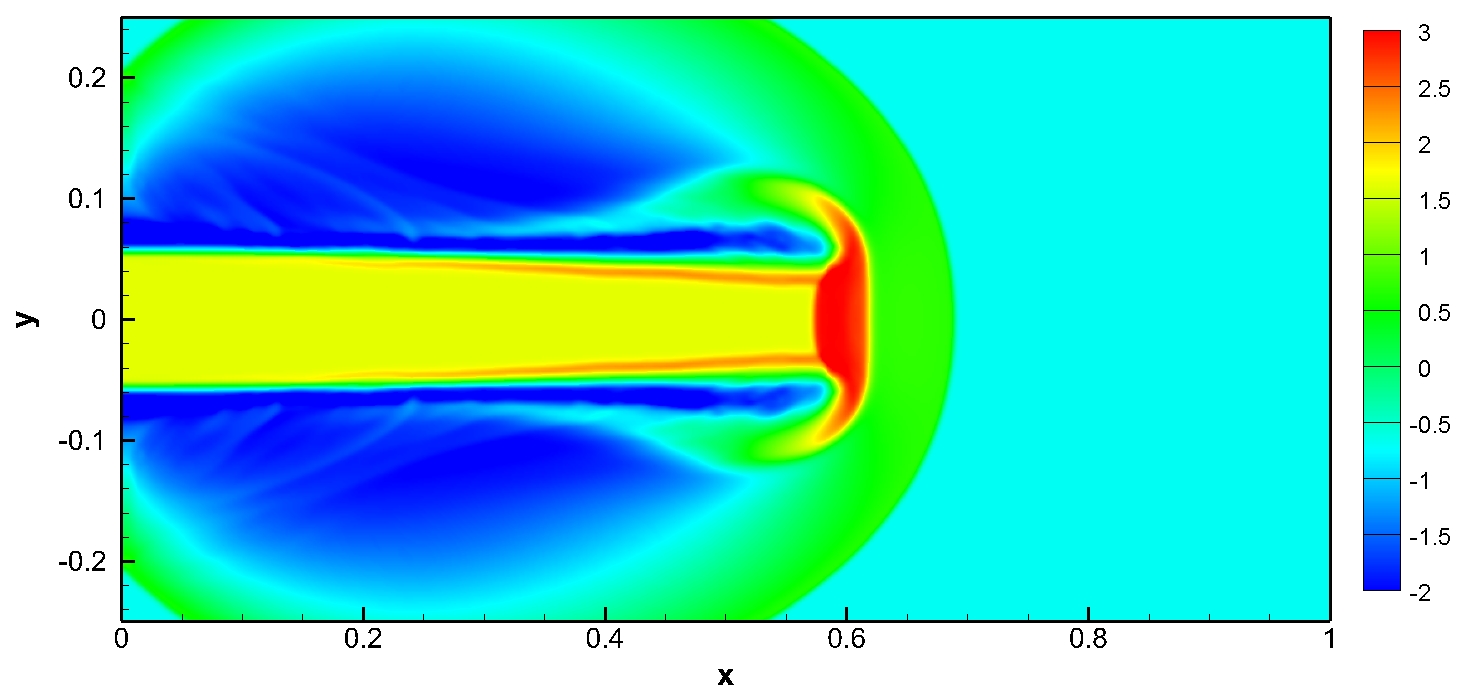}
			\caption{$\mathbb{P}^2$: {\tt classic} approach.}
		\end{center}
	\end{subfigure}
	
	\begin{subfigure}[b]{0.49\textwidth}
		\begin{center}
			\includegraphics[width=\textwidth]{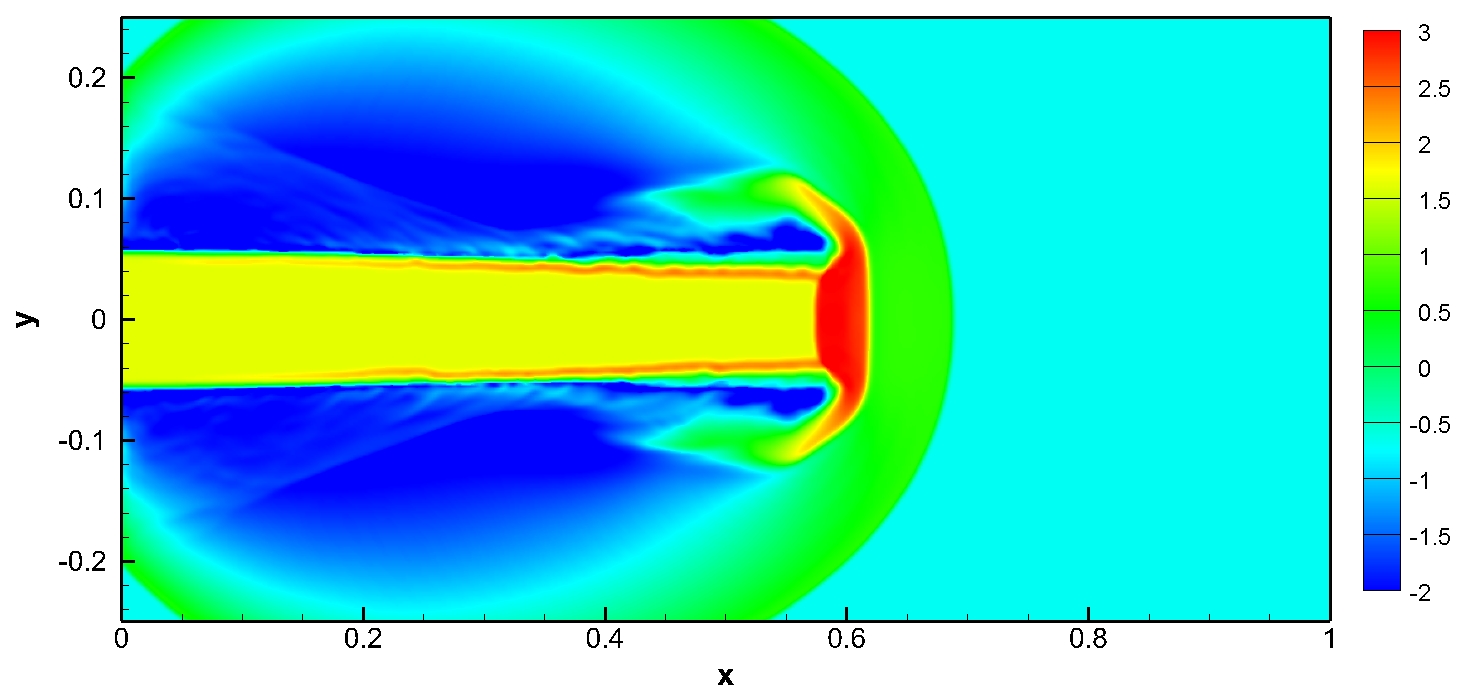}
			\caption{$\mathbb{P}^3$: {\tt optimal} approach.}
		\end{center}
	\end{subfigure}
	\hfill
	\begin{subfigure}[b]{0.49\textwidth}
		\begin{center}
			\includegraphics[width=\textwidth]{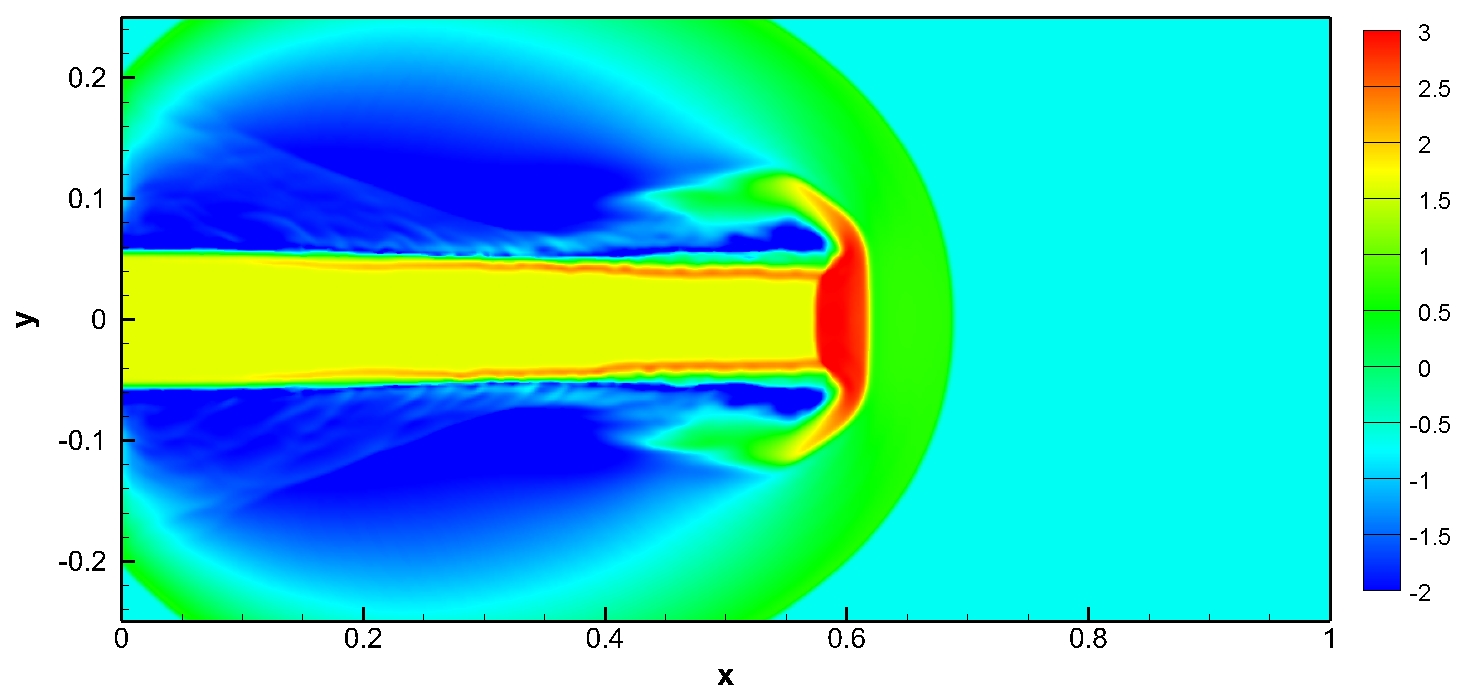}
			\caption{$\mathbb{P}^3$: {\tt classic} approach.}
		\end{center}
	\end{subfigure}
	
	\caption{Same as Figure \ref{fig:ex4_1} except for Example \ref{Ex5} (Mach 2000 jet) at $t=0.001$.}
	\label{fig:ex4_2}
\end{figure}

\end{example}

\begin{example}[Mach 2000 jet problem of Euler equations]\label{Ex5}
Last, a Mach 2000 jet is considered with the Euler equation (\ref{Eq:Euler}) in the domain $[0,1]\times[-0.25,0.25]$.
The setup is the same as Example \ref{Ex4}, except the jet state fixed as $(\rho,v_1,v_2,p) = (5,800,0,0.4127)$ for  $y\in[-0.05,0.05]$ on the left boundary ($x=0$). 
All the other boundaries are of outflow conditions. 
The much higher Mach number renders this jet test more challenging than Example \ref{Ex4}. 
The simulation is performed until $t=0.001$ with $480\times 240$ cells.
Figure \ref{fig:ex4_2} presents the numerical results of density, which demonstrate the comparably high resolution and excellent robustness for both the {\tt optimal} approach and the {\tt classic} approach. 
Table \ref{tab:ex-cpu} also displays the CPU time in this test for both approaches, further confirming the notable advantage of the {\tt optimal} approach in efficiency. 

\end{example}

\section{Summary}

In this paper, we proposed the problem of seeking the optimal  
convex decomposition of the cell average for 
 constructing high-order BP schemes of hyperbolic conservation laws in multiple dimensions within the Zhang--Shu framework. 
It was observed that the classic Zhang--Shu convex decomposition, based on the tensor product of Gauss--Lobatto and Gauss quadratures, is generally not optimal in the multidimensional cases. 
For the $\mathbb P^2$ and $\mathbb P^3$ spaces, which are typically used in the third-order and fourth-order DG schemes, we discovered the optimal convex decomposition that achieves the mildest BP CFL condition yet requires much fewer internal nodes.  
Based on our optimal convex decomposition, we established more efficient high-order BP schemes, which allow a larger BP time step-size than the classic one, in the Zhang--Shu framework.  
We presented several numerical examples to validate the remarkable advantages of using our optimal decomposition over the classic one in terms of efficiency.

The discovery of the optimal convex decomposition was quite nontrivial and might have a broad impact, as it would 
lead to an overall improvement of third-order and fourth-order BP schemes for  
a large class of hyperbolic or convection-dominated equations at the cost of only a slight and local modification to the implementation code. 
Our work in this paper was limited to the multivariate polynomial spaces $\mathbb{P}^2$ and $\mathbb{P}^3$. In more general cases, 
many questions about the optimal convex decomposition are yet open; for example,   
what are the optimal decompositions for more general polynomial spaces $\mathbb{P}^k$ with $k\ge 4$ on Cartesian meshes,  triangular meshes, and more general unstructured meshes? 
We 
hope this paper could motivate further exploration along this direction in the future.


\bibliographystyle{unsrt} 
\bibliography{refs}

\end{document}